\tikzstyle{block} = [draw, fill=blue!20, rectangle, 
\tikzstyle{sum} = [draw, fill=blue!20, circle, node distance=1cm]
\tikzstyle{input} = [coordinate]
\tikzstyle{output} = [coordinate]
\newtheorem{theorem}{Theorem}[section]
\newtheorem{lemma}[theorem]{Lemma}
\theoremstyle{definition}
\newtheorem{definition}[theorem]{Definition}
\newtheorem{remark}[theorem]{Remark}
\newtheorem{example}[theorem]{Example}
\newcommand{\C}{\mathbb{C}}
\newcommand{\R}{\mathbb{R}}
\newcommand{\D}{\mathbb{D}}
\newcommand{\N}{\mathbb{N}}
\renewcommand{\H}{\mathbb{H}}
\newcommand{\abs}[1]{\left| #1 \right|}
\title{The Slope Problem in Discrete Iteration}
\subjclass[2010]{Primary 30D05, 37FXX}
\keywords{Complex dynamics; Discrete iteration; the slope problem}
\date{\today}
\thanks{This research was supported in part by Ministerio de Innovaci\'on y Ciencia, Spain, project PID2022-136320NB-I00. The second author was supported by Ministerio de Universidades, Spain, through the action Ayuda del Programa de Formaci\'on de Profesorado Universitario, reference FPU21/00258.}
\author[M.D. Contreras]{Manuel D. Contreras}
\address{Departamento de Matem\'atica Aplicada II and IMUS, Escuela T\'ecnica Superior de Ingenier\'ia, Universidad de Sevilla,
	Camino de los Descubrimientos, s/n 41092, Sevilla, Spain}
\email{contreras@us.es}
\author[F. J. Cruz-Zamorano]{Francisco J. Cruz-Zamorano}
\address{Departamento de Matem\'atica Aplicada II and IMUS, Escuela T\'ecnica Superior de Ingenier\'ia, Universidad de Sevilla,
	Camino de los Descubrimientos, s/n 41092, Sevilla, Spain}
\email{fcruz4@us.es}
\author[L. Rodr\'iguez-Piazza]{Luis Rodr\'iguez-Piazza}
\address{Departmento de An\'alisis Matem\'atico and IMUS, Facultad de Matem\'aticas, Universidad
	de Sevilla, Calle Tarfia, s/n 41012 Sevilla, Spain}
\email{piazza@us.es}
\begin{document}
\maketitle
\begin{abstract}
The slope problem in holomorphic dynamics in the unit disk goes back to Wolff in 1929. However, there have been several contributions to this problem in the last decade. In this article the problem is revisited, comparing the discrete and continuous cases. Some advances are derived in the discrete parabolic case of zero hyperbolic step, showing that the set of slopes has to be a closed interval which is independent of the initial point. The continuous setting is used to show that any such interval is a possible example. In addition, the set of slopes of a family of parabolic function is discussed, leading to examples of functions with some regularity whose set of slopes is non-trivial.
\end{abstract}
\section{Introduction}
One of the goals in the field of Discrete Complex Dynamics is to study the asymptotic behaviour of the iterated self-composition $g^{n+1} = g^n \circ g$, $n \in \N$, for a given holomorphic function $g \colon \D \to \D$. Among other properties, this behaviour strongly depends on the fixed points of $g$ on $\D$. For example, if $g$ has a (necessarily unique) fixed point $\tau \in \D$ (i.e., $g(\tau) = \tau$) and it is not an automorphism of $\D$, then the Schwarz-Pick lemma assures that $g^n \to \tau$ as $n \to + \infty$ uniformly on compact subsets of $\D$. In this case, $g$ is said to be an elliptic function, and $\tau$ is known as its Denjoy-Wolff point. Concerning non-elliptic functions, Denjoy and Wolff proved the following well-known result:

\begin{theorem}[Denjoy-Wolff]
\cite[Theorem 3.2.1]{AbateNewBook}
Let $g \colon \D \to \D$ be a non-elliptic holomorphic function. Then there exists a point $\tau \in \partial\D$ such that $g^n \to \tau$, uniformly on compact sets of $\D$. Furthermore, $\tau$ is a boundary fixed point, that is, $\angle\lim_{z \to \tau}g(z) = \tau$.
\end{theorem}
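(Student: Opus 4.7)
The plan is to split the argument on whether $g$ is an automorphism of $\D$. If $g$ is a non-elliptic automorphism, the classification of $\operatorname{Aut}(\D)$ places it in the parabolic or hyperbolic class; conjugating to a translation (respectively, a real dilation) of the upper half-plane, one reads off directly that the iterates converge uniformly on compacta to the unique attracting boundary fixed point, which plays the role of $\tau$. Thus the substantive case is when $g$ has no fixed point in $\D$ at all (which, in particular, implies $g$ is not the identity nor an elliptic automorphism).

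For this case, I would first locate the candidate $\tau$ by an approximation argument. Pick $r_n \nearrow 1$ and consider $g_n(z) = r_n g(z)$, which map $\overline{\D}$ into a compact subset of $\D$; Brouwer's fixed point theorem then yields $z_n \in \D$ with $g_n(z_n) = z_n$. By compactness one may extract $z_n \to \tau \in \overline{\D}$, and since $g$ has no interior fixed point, a normal-family argument forces $\tau \in \partial \D$.

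The core of the proof is Wolff's lemma: $g$ maps every horocycle $E(\tau,R) = \{z \in \D : |1-\overline{\tau}z|^2/(1-|z|^2) < R\}$ at $\tau$ into itself. I would derive this by applying the Schwarz--Pick lemma to each $g_n$ at its interior fixed point $z_n$, rewriting the resulting pseudohyperbolic estimate in terms of the horocycle functional above, and then letting $n \to \infty$ using $z_n \to \tau$. Once this invariance is in hand, iteration shows that for every $z \in \D$ the orbit $\{g^k(z)\}$ is eventually trapped in arbitrarily small horocycles at $\tau$.

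With the horocycle trapping established, convergence follows by a normal-families argument. By Montel, $\{g^k\}$ is normal; any subsequential limit $h \colon \D \to \overline{\D}$ is either a holomorphic self-map of $\D$ or a unimodular constant by the maximum principle. The horocycle invariance, combined with the fact that $g$ has no fixed point in $\D$, excludes the first alternative and forces $h \equiv \tau$, yielding uniform convergence of $g^n$ to $\tau$ on compacta. The boundary fixed-point statement $\angle\lim_{z \to \tau} g(z) = \tau$ is then a consequence of the Julia--Carath\'eodory theorem applied to the horocycle invariance. The main obstacle is the proof of Wolff's lemma itself: the computation is short but requires handling the passage $z_n \to \tau$ in the Schwarz--Pick inequality without losing the sharp horocycle geometry; everything else reduces to standard normality arguments on $\D$.
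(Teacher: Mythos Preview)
The paper does not give its own proof of this theorem: it is stated with a citation to \cite[Theorem 3.2.1]{AbateNewBook} and then used as background. So there is no in-paper argument to compare against. Your outline is the classical Wolff--Denjoy proof (approximate fixed points $z_n$ of $r_n g$, Schwarz--Pick at $z_n$ passed to the limit to obtain horocycle invariance, then Montel plus the maximum principle to identify every subsequential limit as the constant $\tau$, and finally Julia--Carath\'eodory for the angular fixed-point statement), and it is essentially correct and matches the standard reference.

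One small logical slip worth tightening: the sentence ``iteration shows that for every $z\in\D$ the orbit $\{g^k(z)\}$ is eventually trapped in arbitrarily small horocycles at $\tau$'' overstates what invariance alone gives. Wolff's lemma only says each horocycle $E(\tau,R)$ is forward-invariant, so the orbit stays in the horocycle it starts in; it does not by itself push the orbit into smaller horocycles. The shrinking is precisely what the normal-families step delivers: any subsequential limit $h$ must send every $E(\tau,R)$ into its closure, and ruling out non-constant $h$ (via the absence of interior fixed points, or via strict Schwarz--Pick contraction when $g$ is not an automorphism) forces $h\equiv\tau$. So reorder those two sentences, or simply drop the ``arbitrarily small horocycles'' claim and go straight from invariance to the normal-families argument.
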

Similarly as before, $\tau$ is called the Denjoy-Wolff point of $g$.

This work is devoted to non-elliptic functions, which can be further divided in two subgroups whose dynamical properties are usually different. To present them, one can show (see \cite[Corollary 2.5.5]{AbateNewBook}) that the following limit exists:
$$\lambda = \angle\lim_{z \to \tau}\dfrac{g(z)-\tau}{z-\tau} \in (0,1].$$
If $\lambda < 1$, $g$ is called hyperbolic. In the case that $\lambda = 1$, $g$ is said parabolic.

From an initial point $w_0 \in \D$, an orbit $\{w_n\} \subset \D$ is defined as
$$w_{n+1} = g(w_n) = g^{n+1}(w_0), \quad n \in \N.$$
For non-elliptic functions $g$, the Slope Problem consists on determining the directions through which the orbits approach the boundary Denjoy-Wolff point of $g$. That is, given $w_0 \in \D$, calculating the numbers $s \in [-\pi/2,\pi/2]$ such that there exists a subsequence of the orbit satisfying $\arg(1-\overline{\tau}w_{n_k}) \to s$.

In this context, it might be useful to substitute $\D$ as a domain in order to ease the calculations. To do so, if $g$ is a non-elliptic function, then let $S \colon \D \to \H$ be a Möbius transformation with $S(\tau) = \infty$, where
$\H = \{z \in \C \mid \text{Im}(z) > 0\}$
is the upper halfplane. Using $S$, $g$ can be conjugated to a holomorphic function $f \colon \H \to \H$, given by $f = S^{-1} \circ g \circ S$. This function $f$ is non-elliptic, and its Denjoy-Wolff point is $\infty$. That is, for every $z_0\in \H$, $f^n(z_0)$ converges to $\infty$ as $n$ goes to $\infty$.

Following the same conjugation, pick $w_0 \in \D$ and let $z_0 = S(w_0) \in \H$.
Define the orbit
$$z_{n+1} = f(z_n) = f^{n+1}(z_0), \quad n \in \N.$$
It is possible to check that $\arg(1-\overline{\tau}w_{n_k}) \to s$ if and only if $\pi/2-\arg(z_{n_k}) \to s$. Using this notation, the following idea can be introduced:
\begin{definition}
Let $f \colon \H \to \H$ be a holomorphic function whose Denjoy-Wolff point is $\infty$. Then, given $z_0 \in \H$, we define the set of slopes of $f$ as
$$\mathrm{Slope}[f,z_0] = \{s \in [0,\pi] : \exists \{n_k\} \subset \N \text{ with } \arg(z_{n_k}) \to s\},$$
where $\arg$ denotes the principal branch of the argument function.
\end{definition}

The properties of the set $\mathrm{Slope}[f,z_0]$ depends on the properties of $f$. For example, if the function $f$ is hyperbolic, the slope problem goes back to Wolff \cite{WolffSlope} and Valiron \cite{Valiron}. Namely, it turns out that the orbits converge non-tangentially to the Denjoy-Wolff point with a definite slope. That is, $\mathrm{Slope}[f,z]$ is a singleton in $(0,\pi)$ for all $z\in \H$ (see \cite[Theorem 4.3.4]{AbateNewBook}). Recently, Bracci and Poggi-Corradini proved that the function $\mathrm{Slope}[f,\cdot]\colon \H\to (0,\pi)$ is surjective and harmonic \cite[Property 2 (a) and Property 2 (b)]{BracciCorradiniValiron}.

When $f$ is parabolic, the behavior of the slope strongly depends on the hyperbolic step of the function (see Definition \ref{def:hypstep}). If $f$ is of positive hyperbolic step, Pommerenke proved that $\mathrm{Slope}[f,z] \subset \{0,\pi\}$ for all $z\in \H$; see \cite[Remark 1]{PommerenkeHalfPlane}. This can be slightly improved, as noted in Remark \ref{remark:PHP}: the set of slopes is always a singleton and does not depend on the starting point.

There are not many references in the literature about this problem when $f$ is of zero hyperbolic step. Indeed, the study of such scenario is the purpose of this paper. One of the first references on this is the remarkable paper of Wolff \cite[Section 6]{WolffSlope}, where he found a parabolic function $f$ such that $\mathrm{Slope}[f,z_{0}]$ contains at least two points. In the last two decades there are some new results on the set of slopes of parabolic functions that can be embedded into a continuous semigroup (see \cite{BetsakosSlope,CDMGSlope,KelgiannisSlope}) but, as far as we know, there are no advances in the discrete setting. In Section \ref{sec:Parabolic}, we obtain that the set of slopes of a parabolic function of zero hyperbolic step is always an interval which does not depend on the initial point of the orbit (see Theorem \ref{teo:0HS}). We also show that for any close interval $[a,b]\subset [0,\pi]$ there exists a parabolic function $f$ of zero hyperbolic step such that $\mathrm{Slope}[f,z]=[a,b]$ (see Theorem \ref{teo:0HS2}).

The Slope Problem can also be introduced in the continuous setting of Complex Dynamics. Indeed, the results that we have just introduced have an exact analogue in this other setting; see \cite[Sections 17.4-6]{Contreras} for reference. There, the orbits are continuous curves defined through non-elliptic semigroups (see Definition \ref{def:semigroup}), and the directions through which orbits converge to the boundary are also studied. Even if the results are conceptually similar, the fact that the orbits can either be connected or discrete sets completely changes the problem from a mathematical point of view.

To develop some new ideas, we also introduce a family of parabolic functions that has been previously considered in the literature \cite{Aaronson,HS}. This family is defined in terms of a real parameter and a positive finite measure. In Section \ref{sec:integrable} we characterize the set of slopes of these functions depending on some integrability requirements. Using these ideas we are able to provide explicit examples of parabolic functions $f$ of zero hyperbolic step with some regularity at the Denjoy-Wolff point for which $\mathrm{Slope}[f,z_0] = [0,\pi]$ or $\mathrm{Slope}[f,z_0] = [0,\pi/2]$. This highly contrasts with some known results on semigroups, where regularity is used to assure that the orbits converge to the boundary with a definite slope; see \cite[Theorem 21]{Regularity} or \cite[Proposition 17.5.5]{Contreras}.

\section{Parabolic functions}
\label{sec:Parabolic}
From now on, we will work with functions $f \colon \H \to \H$ that are parabolic and whose Denjoy-Wolff point is $\infty$. These functions can be split into two different classes. To introduce them, let $\rho$ be the pseudo-hyperbolic distance in $\H$ given by
$$\rho(z,w) = \abs{\dfrac{z-w}{z-\overline{w}}}, \quad z,w \in \H.$$
With the aid of Schwarz-Pick Lemma, it is clear that the sequence $\rho(z_{n+1},z_n) = \rho(f(z_n),f(z_{n-1}))$ is non-increasing for any initial point $z_0 \in \D$, and therefore it must converge. In fact, if its limit is zero for some initial point, then it is also zero for every initial point, as stated in \cite[Corollary 4.6.9.(i)]{AbateNewBook}. With this idea, the following property can be defined:
\begin{definition} \label{def:hypstep}
Let $f \colon \H \to \H$ be a parabolic function. We say that $f$ is of zero hyperbolic step if $\rho (z_{n+1},z_n) \to 0$ as $n \to +\infty$ for some (equivalently, for every) $z_0 \in \H$. Otherwise, we say $f$ is of positive hyperbolic step.
\end{definition}
Determining general conditions to distinguish the hyperbolic step of parabolic functions is a subtle task. For instance, the following is a well-known result about that:
\begin{theorem}
\cite[Theorem 1]{PommerenkeHalfPlane}
\label{teo:Pomm}
Let $f \colon \H \to \H$ be a parabolic function with Denjoy-Wolff point at $\infty$. The following limit converges:
$$b = \lim_{n \to + \infty}\dfrac{x_{n+1}-x_n}{y_n} \in \R,$$
where $z_n = x_n + iy_n = f^n(z_0)$, $n \in \N,$ is the orbit of some initial point $z_0 \in \H$. Furthermore, $z_{n+1}/z_n \to 1$ and $y_{n+1}/y_n \to 1$ as $n \to + \infty$. Additionally, $b = 0$ if and only if $f$ is of zero hyperbolic step.
\end{theorem}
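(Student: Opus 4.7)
The plan rests on three ingredients: Schwarz--Pick (monotonicity of pseudohyperbolic distances), Julia's lemma at the Denjoy--Wolff point $\infty$, and the Nevanlinna--Herglotz representation for self-maps of $\H$ with angular derivative $1$ at $\infty$:
$$f(z)=z+\beta+\int_{\R}\left(\frac{1}{t-z}-\frac{t}{1+t^{2}}\right)d\mu(t),$$
with $\beta\in\R$ and $\mu$ a positive Borel measure on $\R$ satisfying $\int(1+t^{2})^{-1}d\mu(t)<\infty$. Set $a_n=x_{n+1}-x_n$ and $b_n=y_{n+1}-y_n$. Julia's lemma (horodisks at $\infty$ are upper half-planes) gives $b_n\geq 0$, while Schwarz--Pick gives that $\rho_n:=\rho(z_{n+1},z_n)$ is non-increasing, hence converges to some $\rho_\infty\in[0,1)$ with $\rho_\infty=0$ iff $f$ is of zero hyperbolic step.

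To obtain $y_{n+1}/y_n\to 1$, take imaginary parts in the representation:
$$\frac{b_n}{y_n}=\int_{\R}\frac{d\mu(t)}{(t-x_n)^{2}+y_n^{2}};$$
since $z_n\to\infty$, dominated convergence (with integrable majorant a constant multiple of $(1+t^{2})^{-1}$) gives $b_n/y_n\to 0$. Inserting this into the explicit identity
$$\rho_n^{2}=\frac{a_n^{2}+b_n^{2}}{a_n^{2}+(2y_n+b_n)^{2}}=\frac{(a_n/y_n)^{2}+(b_n/y_n)^{2}}{(a_n/y_n)^{2}+(2+b_n/y_n)^{2}}$$
shows that $\{a_n/y_n\}$ is bounded and that any subsequential limit $A$ satisfies $\rho_\infty^{2}=A^{2}/(A^{2}+4)$, i.e.\ $|A|=2\rho_\infty/\sqrt{1-\rho_\infty^{2}}$. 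Once convergence of $a_n/y_n$ is established, the equivalence $b=0\iff\rho_\infty=0\iff$ zero hyperbolic step is immediate.

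The hardest point is promoting boundedness of $a_n/y_n$ to a genuine signed limit. My plan is to study the rescaled maps $h_n(w)=(f(x_n+y_n w)-x_n)/y_n$, which are holomorphic self-maps of $\H$ with $h_n(i)=a_n/y_n+i\,y_{n+1}/y_n$. By Montel's theorem the sequence $\{h_n\}$ is normal. A functional identity relating $h_{n+1}$ and $h_n\circ h_n$ (both describe the two-step iteration from $z_n$ to $z_{n+2}$), together with the slow-variation estimate $y_{n+1}/y_n\to 1$, would yield $|a_{n+1}/y_{n+1}-a_n/y_n|\to 0$. The cluster set of $\{a_n/y_n\}$ is then connected and contained in the two-point set $\{\pm 2\rho_\infty/\sqrt{1-\rho_\infty^{2}}\}$, so it reduces to a single value $b\in\R$.

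For the remaining claim $z_{n+1}/z_n\to 1$, write it as $1+(a_n+ib_n)/z_n$. The imaginary contribution satisfies $|b_n/z_n|\leq b_n/y_n\to 0$. For the real part, $|a_n|\leq Cy_n$ by the bound above and $|z_n|\geq y_n$; when $b\neq 0$ the partial sums $x_n\sim b\sum_{k<n}y_k$ together with $y_{n+1}/y_n\to 1$ force $|x_n|/y_n\to\infty$ via a Stolz--Ces\`aro argument, so $|z_n|/y_n\to\infty$ and $|a_n/z_n|\to 0$; when $b=0$ one has $|a_n|/y_n\to 0$ and $|a_n/z_n|\leq|a_n|/y_n\to 0$ directly. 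The principal obstacle is the oscillation control in the third paragraph: pinning down a single sign for $a_n/y_n$ requires genuine use of the holomorphic structure of $f$ beyond the algebraic pseudohyperbolic identity.
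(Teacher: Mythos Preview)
The paper does not supply its own proof of this statement: Theorem~\ref{teo:Pomm} is quoted verbatim from \cite[Theorem~1]{PommerenkeHalfPlane} with no argument, so there is nothing in the paper to compare your attempt against. Your outline is in the spirit of Pommerenke's original approach (Herglotz representation plus Schwarz--Pick monotonicity of $\rho_n$), but as written it is a sketch with two genuine gaps.

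First, the dominated convergence step for $b_n/y_n\to 0$ is not justified. You assert that $\dfrac{1}{(t-x_n)^{2}+y_n^{2}}$ is majorized by a fixed multiple of $(1+t^{2})^{-1}$, but the inequality $1+t^{2}\le C\bigl((t-x_n)^{2}+y_n^{2}\bigr)$ cannot hold uniformly in $n$: evaluating at $t=x_n$ forces $C\ge (1+x_n^{2})/y_n^{2}$, and nothing so far bounds $x_n^{2}/y_n^{2}$ (for positive hyperbolic step this ratio in fact diverges). The conclusion $y_{n+1}/y_n\to 1$ is true, but it is not a one-line consequence of the representation; Pommerenke obtains it by a different route.

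Second---and you acknowledge this yourself---upgrading ``$|a_n/y_n|$ converges'' to ``$a_n/y_n$ converges'' is the crux, and it is left open. The relation that actually links the rescalings is $h_{n+1}=S_n^{-1}\circ h_n\circ S_n$ with $S_n(w)=a_n/y_n+(1+b_n/y_n)w$; this is a conjugation, not a composition of the form ``$h_{n+1}$ versus $h_n\circ h_n$'', and by itself it does not yield $|a_{n+1}/y_{n+1}-a_n/y_n|\to 0$ without additional input from the holomorphic structure (normal-family limits of the $h_n$ must be identified and shown to be translations, which is essentially Pommerenke's argument). Until this step is carried out, the proof is incomplete, and your final paragraph on $z_{n+1}/z_n\to 1$ remains conditional on it.
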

\begin{remark}
\label{remark:PHP}
Using the ideas in the last result, Pommerenke \cite[Remark 1]{PommerenkeHalfPlane} proved that $\mathrm{Slope}[f,z_0] \subset \{0,\pi\}$ if $f$ is a parabolic function of positive hyperbolic step. However, note that if $z,w \in \H$ are such that $\mathrm{Slope}[f,z] = \{0\}$ and $\mathrm{Slope}[f,w] = \{\pi\}$, then $\rho(f^n(z),f^n(w)) \to 1$, which contradicts the Schwarz-Pick Lemma. Therefore, one of the following must apply: $\mathrm{Slope}[f,z] = \{0\}$ for every $z \in \H$ or $\mathrm{Slope}[f,z] = \{\pi\}$ for every $z \in \H$. 

Indeed, for any given $z_0 \in \H$, Theorem \ref{teo:Pomm} implies that 
\begin{align*}
\lim_{n \to + \infty} \dfrac{x_n}{y_n} & = \lim_{n \to + \infty} \dfrac{x_{n+1}-x_n}{y_{n+1}-y_n} =  \lim_{n \to + \infty} \dfrac{x_{n+1}-x_n}{y_n}\dfrac{y_n}{y_{n+1}-y_n} = \alpha \cdot (+\infty),
\end{align*}
where we have used that $y_{n+1}/y_n \to 1$, as shown in Theorem \ref{teo:Pomm}. In that case, it is obvious that $\mathrm{Slope}[f,z_0] = \{0\}$ if and only if $\alpha > 0$. Therefore, the sign of the limit $\alpha = \alpha(z_0)$ is independent of the initial point.
\end{remark}

In contrast to the case of hyperbolic or parabolic functions with positive hyperbolic step, there are not many references concerning the zero hyperbolic step situation. To develop some advances on this topic, we are lead by the analogue results in continuous iteration. In seek of completeness, let us start by stating a lemma on sequences:

\begin{lemma}
\label{lemma:limitpoints}
Let $\{x_n\}$ be a bounded sequence with $x_{n+1}-x_n \to 0$ as $n \to + \infty$. The set of limit points of the sequence, that is, $L = \{s \in \R : \exists\{n_k\}\subset\N \text{ with } x_{n_k} \to s\}$ is a closed and connected set.
\begin{proof}
The set of limit points of a sequence is always closed. Therefore, it only remains to show that $L$ is connected. To do so, let $a,b \in L$ with $a \leq b$ and choose any $a \leq x \leq b$. We can show that $x \in L$ as follows:

Fix $\epsilon_1 = 1$. Let $N_1$ be such that $\abs{x_{n+1}-x_n} \leq 1$ for all $n \geq N_1$. By definition, there exists $n_1,m_1 \geq N_1$ such that $\abs{x_{n_1}-a} \leq 1$ and $\abs{x_{m_1}-b} \leq 1$. Without loss of generality, suppose that $n_1 \leq m_1$. Then, there exists $n_1 \leq l_1 \leq m_1$ such that $\abs{x_{l_1}-x} \leq 1$. 

Similarly, fix $\epsilon_k = 1/k$ for $k \geq 2$. Let $N_k \geq l_{k-1}$ be such that $\abs{x_{n+1}-x_n} \leq \epsilon_k$ for all $n \geq N_k$. By definition, there exists $n_k,m_k \geq N_k$ such that $\abs{x_{n_k}-a} \leq \epsilon_k$ and $\abs{x_{m_k}-b} \leq \epsilon_k$. Without loss of generality, suppose that $n_k \leq m_k$. Then, there exists $n_k \leq l_k \leq m_k$ such that $\abs{x_{l_k}-x} \leq \epsilon_k$. That is, $x_{l_k} \to x$, and so $x \in L$.
\end{proof}
\end{lemma}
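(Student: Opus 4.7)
The plan is to prove the two assertions separately. That $L$ is closed is a general fact about the set of subsequential limits of any real sequence (a limit of subsequential limits is again a subsequential limit, by a standard diagonal extraction), so I would dispatch this in one line and concentrate on connectedness. To prove $L$ is connected I would show it is an \emph{interval}: given $a,b\in L$ with $a<b$ and any $x\in[a,b]$, the goal is to exhibit a strictly increasing sequence of indices $l_k$ with $x_{l_k}\to x$.

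The key mechanism is a discrete intermediate value argument enabled by the hypothesis $x_{n+1}-x_n\to 0$. Fix $\varepsilon_k=1/k$. Choose $N_k\in\N$ so that $|x_{n+1}-x_n|<\varepsilon_k$ for all $n\geq N_k$, enlarging if necessary so that $N_k>l_{k-1}$ (with $l_0=0$) in order to guarantee strict monotonicity of the final subsequence. Because $a,b\in L$, select $n_k,m_k\geq N_k$ with $|x_{n_k}-a|<\varepsilon_k$ and $|x_{m_k}-b|<\varepsilon_k$; swapping the two indices if necessary, assume $n_k<m_k$. For $k$ large one has $x_{n_k}<x<x_{m_k}$, so the finite block $x_{n_k},x_{n_k+1},\dots,x_{m_k}$ straddles $x$ while consecutive entries move by less than $\varepsilon_k$. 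Taking $l_k$ to be, say, the largest index $i\in[n_k,m_k]$ with $x_i\leq x$ then yields $|x_{l_k}-x|<\varepsilon_k$.

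Concatenating over $k$ produces $x_{l_k}\to x$, so $x\in L$, proving $L$ is an interval and hence connected. I do not anticipate a serious obstacle: the only bookkeeping is arranging the $l_k$ to be strictly increasing, which is handled uniformly by forcing $N_k>l_{k-1}$ at each step. Boundedness of $\{x_n\}$ is not used in the connectedness step itself (only to ensure $L$ is nonempty and lives in a compact interval), and the argument is entirely elementary — it transplants, almost verbatim, the standard fact that a discrete curve in $\R$ with vanishing increments has a connected set of accumulation points.
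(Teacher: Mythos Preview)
Your proposal is correct and follows essentially the same route as the paper: dispatch closedness in one line, then for connectedness pick $a,b\in L$, $x\in[a,b]$, and at stage $k$ use $\varepsilon_k=1/k$ together with the small-increment hypothesis to run a discrete intermediate value argument on a block of indices between one close to $a$ and one close to $b$, while forcing $N_k>l_{k-1}$ to keep the extracted indices increasing. The only cosmetic difference is that you spell out the choice of $l_k$ explicitly (largest index with $x_i\le x$), whereas the paper simply asserts the existence of such an $l_k$.
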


As in the continuous setting (see \cite[p. 501]{Contreras}), we can prove that the set of slopes is a (non necessarily trivial) closed and connected set, regardless of the orbit being discrete:

\begin{theorem}
\label{teo:0HS}
Let $f \colon \H \to \H$ be a parabolic function of zero hyperbolic step with Denjoy-Wolff point at $\infty$. Then, $\mathrm{Slope}[f,z] = \mathrm{Slope}[f,w]$ for all $z,w \in \H$. Furthermore, there exists $0 \leq a \leq b \leq \pi$ such that $\mathrm{Slope}[f,z] = [a,b]$ for any $z \in \H$.
\end{theorem}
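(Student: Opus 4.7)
The proof naturally splits into two tasks: showing that $\mathrm{Slope}[f,z]$ is a closed subinterval of $[0,\pi]$ for each $z$, and showing that this subinterval is the same for every $z$.

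For the first task, set $\theta_n = \arg z_n$. Theorem \ref{teo:Pomm} gives $z_{n+1}/z_n \to 1$, hence
\[
\theta_{n+1} - \theta_n \;=\; \arg\bigl(z_{n+1}/z_n\bigr) \;\longrightarrow\; 0
\]
(the principal branch is the correct one since both $\theta_n,\theta_{n+1}\in(0,\pi)$). As $\{\theta_n\}\subset[0,\pi]$ is bounded with consecutive differences tending to zero, Lemma \ref{lemma:limitpoints} applied to $\{\theta_n\}$ ensures that $\mathrm{Slope}[f,z]$ is closed and connected, hence a closed subinterval $[a,b]$ of $[0,\pi]$. Note that this step does not use the zero hyperbolic step hypothesis; that hypothesis will enter only in the second task.

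For the second task, let $z,w\in\H$ with orbits $\{z_n\}$ and $\{w_n\}$; write $z_n=x_n+iy_n$, $w_n=x'_n+iy'_n$. The plan is to establish the strong asymptotic comparison
\[
\rho(z_n,w_n) \;\longrightarrow\; 0.
\]
Granting this, the standard pseudo-hyperbolic--Euclidean dictionary on $\H$ yields $y_n/y'_n\to 1$ and $|z_n-w_n|/y_n \to 0$ (from $|z_n-\bar w_n|\geq y_n+y'_n$ and Schwarz--Pick). In particular $|z_n-w_n|/|w_n|\leq (|z_n-w_n|/y_n)(y_n/y'_n) \to 0$, so $z_n/w_n \to 1$, and hence $\theta_n - \theta'_n = \arg(z_n/w_n) \to 0$. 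It follows at once that the sequences $\{\theta_n\}$ and $\{\theta'_n\}$ have exactly the same set of subsequential limits, i.e., $\mathrm{Slope}[f,z]=\mathrm{Slope}[f,w]$.

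The main obstacle is thus the comparison $\rho(z_n,w_n)\to 0$, a nontrivial dynamical statement where the zero hyperbolic step hypothesis is essential: Schwarz--Pick already supplies monotonicity of $\rho(z_n,w_n)$, so the task is to rule out a positive limit. A natural route exploits the Herglotz-type representation $f(z)=z+p(z)$ with $p\colon\H\to\overline{\H}$. Schwarz--Pick for $p$ gives $|p'(z)|\leq\operatorname{Im}p(z)/\operatorname{Im}z$, and Theorem \ref{teo:Pomm} together with the zero hyperbolic step assumption identifies $\operatorname{Im}p(z_n)/y_n = (y_{n+1}-y_n)/y_n \to 0$, so $|p'(z_n)|\to 0$ along the orbit. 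Propagating this vanishing derivative (via Schwarz--Pick control of $p$ on hyperbolic balls of bounded radius centered at $z_n$, which contain the geodesic segment to $w_n$ since Schwarz--Pick bounds $d_\H(z_n,w_n)$) allows one to estimate $|p(w_n)-p(z_n)|/y_n$ by a quantity tending to zero; iterating the recursion $w_{n+1}-z_{n+1}=(w_n-z_n)+(p(w_n)-p(z_n))$ and combining with $y_{n+1}/y_n\to 1$ then forces $|w_n-z_n|/y_n \to 0$, equivalently $\rho(z_n,w_n)\to 0$. This last propagation is the delicate heart of the argument; the surrounding pieces are routine.
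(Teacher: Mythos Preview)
Your argument for the closed-interval part is exactly the paper's Step~2: use $z_{n+1}/z_n\to 1$ from Theorem~\ref{teo:Pomm} to get $\arg z_{n+1}-\arg z_n\to 0$, then apply Lemma~\ref{lemma:limitpoints}.

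For the independence part, the overall strategy also matches the paper: both routes pass through the key fact
\[
\rho(z_n,w_n)=\rho\bigl(f^n(z),f^n(w)\bigr)\longrightarrow 0,
\]
and then deduce that the two argument sequences have the same cluster set (the paper does this via a short polar-coordinate computation; your deduction $z_n/w_n\to 1$ is an equally valid variant).

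The difference---and the gap---is in how that key fact is obtained. The paper simply \emph{cites} it: for parabolic maps of zero hyperbolic step, $\rho(f^n(z),f^n(w))\to 0$ for every pair $z,w$ is a known result (\cite[Corollary~4.6.9]{AbateNewBook}). You instead attempt to prove it from scratch via the representation $f=\mathrm{id}+p$, the Schwarz--Pick bound $|p'(z_n)|\le \operatorname{Im}p(z_n)/y_n\to 0$, and the recursion $w_{n+1}-z_{n+1}=(w_n-z_n)+(p(w_n)-p(z_n))$. But the step you yourself flag as ``the delicate heart of the argument'' is not actually carried out, and as written it does not close. Even granting (via Harnack on a hyperbolic ball of fixed radius) an estimate of the form $|p(w_n)-p(z_n)|\le C\operatorname{Im}p(z_n)$, the resulting inequality
\[
r_{n+1}\le (1-\delta_n)\,r_n + C\,\delta_n,\qquad r_n:=\frac{|w_n-z_n|}{y_n},\quad \delta_n:=\frac{y_{n+1}-y_n}{y_{n+1}},
\]
only yields $\limsup r_n\le C$, not $r_n\to 0$; the constant $C$ depends on the Harnack constant for the ball of radius $d_\H(z_n,w_n)$, which you have not yet shown shrinks. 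A genuine bootstrap would be needed, and you would also have to justify $\sum\delta_n=\infty$ (equivalently $y_n\to\infty$), which is not automatic for zero hyperbolic step. None of this is in your sketch.

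The cleanest fix is to do what the paper does: invoke \cite[Corollary~4.6.9]{AbateNewBook} for $\rho(z_n,w_n)\to 0$ and keep the rest of your argument, which is correct.
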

\begin{proof}
\textit{Step 1.} $\mathrm{Slope}[f,z]$ does not depend on $z$.

The idea for this proof comes from \cite[p. 51, Lemma 1.8.6]{Contreras}. Given $z_0,w_0 \in \H$, we will show that $\mathrm{Slope}[f,z_0] \subset \mathrm{Slope}[f,w_0]$. By symmetry, that is enough to conclude that the set of slopes does not depend on the starting point of the orbit.

To do so, let $s \in \mathrm{Slope}[f,z_0]$. In that case, there exists $\{n_k\} \subset \N$ such that $\arg(z_{n_k})\to s$. Then, one can write $z_{n_k} = r_{n_k}e^{i\alpha_{n_k}}$, where $r_{n_k} \to + \infty$ and $\alpha_{n_k} \to s$ as $n_k \to +\infty$.

Now, pick a limit point $s'$ of the set $\{\arg(w_{n_k})\}$ and a subsequence (which we will denote as $n_k$ as well, for brevity) such that $\arg(w_{n_k}) \to s'$. We will show that $s = s'$. Again, write $w_{n_k} = t_{n_k}e^{i\beta_{n_k}}$, where $t_{n_k} \to + \infty$ and $\beta_{n_k} \to s'$ as $n_k \to + \infty$. As $f$ is of zero hyperbolic step, by \cite[Corollary 4.6.9]{AbateNewBook}, we have
$$\rho(z_{n_k},w_{n_k}) = \rho(f^{n_k}(z_0),f^{n_k}(w_0)) \to 0, \quad \text{as } k \to +\infty.$$
But
\begin{align*}
\rho(z_{n_k},w_{n_k}) & = \abs{\dfrac{r_{n_k}e^{i\alpha_{n_k}}-t_{n_k}e^{i\beta_{n_k}}}{r_{n_k}e^{i\alpha_{n_k}}-t_{n_k}e^{-i\beta_{n_k}}}}  = \abs{\dfrac{\lambda_{n_k}e^{i\alpha_{n_k}}-(1-\lambda_{n_k})e^{i\beta_{n_k}}}{\lambda_{n_k}e^{i\alpha_{n_k}}-(1-\lambda_{n_k})e^{-i\beta_{n_k}}}},
\end{align*}
where
$$\lambda_{n_k} = \dfrac{r_{n_k}}{r_{n_k}+t_{n_k}}.$$
Suppose that $s \neq s'$. Note that $\lambda_{n_k} \in (0,1)$. In particular, choose a subsequence (again, also denoted as $n_k$) such that $\lambda_{n_k} \to \lambda \in [0,1]$. In that case, taking limits, one has
$$\dfrac{\lambda e^{is}-(1-\lambda)e^{is'}}{\lambda e^{is}-(1-\lambda)e^{-is'}} = 0,$$
where $\lambda e^{is}-(1-\lambda)e^{-is'} \neq 0$ because $s\neq s'$.
Thus $\lambda e^{is}-(1-\lambda)e^{is'} = 0$, from which it follows that $\lambda = 1/2$ and $e^{i(s-s')} = 1$. Since $s-s' \in [-\pi,\pi]$, we get $s = s'$, which contradicts the assumption. \\

\textit{Step 2.} $\mathrm{Slope}[f,z]$ is a closed interval.

Remember that $\mathrm{Slope}[f,z]$ is the set of limit points of $\{\arg(z_n) \mid n \in \N\}$. But, due to Theorem \ref{teo:Pomm}, $\arg(z_{n+1})-\arg(z_n) \to 0$ as $n \to + \infty$.
Then, Lemma \ref{lemma:limitpoints} assures that $\mathrm{Slope}[f,z]$ is a closed interval.
\end{proof}

As said before, the slope problem has also been of interest in the theory of continuous iteration, that is, the theory of semigroups. We recall:
\begin{definition}
\label{def:semigroup}
A (continuous) semigroup (of holomorphic self-maps of the upper halfplane) is a family of holomorphic functions $\{\phi_t \colon \H \to \H \mid t \geq 0\}$ such that
\begin{enumerate}[\hspace{0.5cm}(a)]
\item $\phi_0$ is the identity map in $\H$,
\item $\phi_{s+t} = \phi_s \circ \phi_t,$ for $s,t \geq 0$,
\item the map $t \in [0,+\infty) \mapsto \phi_t$ is continuous, with respect to the Euclidean topology and the topology given by the uniform convergence on compact sets of $\H$.
\end{enumerate}
\end{definition}

A semigroup $\{\phi_{t}\}$ is parabolic (resp. of zero or positive hyperbolic step) if there exists $t_{0}>0$ such that $\phi_{t_{0}}$ is parabolic (resp. of zero hyperbolic step). It can be checked that in such a case all the functions $\phi_{t}$, with $t>0$, are parabolic (resp, of zero or positive hyperbolic step); see \cite[Remark 8.3.4, Definition 9.3.4]{Contreras}.

One can also consider the set of slopes in the continuous setting: given a non-elliptic semigroup  $\{\phi_{t}\}$, the set of slopes of the semigroup at $z\in \H$ is defined as
$$\mathrm{Slope}[\{\phi_t\},z] = \{s \in [0,\pi] : \exists \{t_k\} \subset [0,+\infty),  \text{ with }t_{k}\to +\infty, \, \arg(\phi_{t_{k}}(z)) \to s\}.$$

In this context, the following is a well-known and celebrated result:
\begin{theorem}
\label{teo:semigroup}
\cite[Theorem 1]{KelgiannisSlope}
For any $0 \leq a \leq b \leq \pi$ there exists a parabolic semigroup $\{\phi_t\}$  of zero hyperbolic step whose Denjoy-Wolff point is $\infty$ such that $\mathrm{Slope}[\{\phi_t\},z] = [a,b]$ for all $z \in \H$.
\end{theorem}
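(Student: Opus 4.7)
The plan is to construct the semigroup via its Koenigs function. Recall that any parabolic semigroup $\{\phi_t\}$ in $\H$ with Denjoy--Wolff point at $\infty$ admits a univalent holomorphic function $h\colon \H \to \C$ satisfying $h(\phi_t(z)) = h(z) + t$ for every $z \in \H$ and $t \geq 0$. The image $\Omega := h(\H)$ is simply connected and invariant under positive horizontal translations (that is, $\Omega + t \subset \Omega$ for $t \geq 0$), and the semigroup is of zero hyperbolic step exactly when $\Omega$ is not contained in any horizontal strip. Conversely, given any such $\Omega$, picking a Riemann map $h\colon \H \to \Omega$ and setting $\phi_t(z) := h^{-1}(h(z) + t)$ produces a parabolic semigroup of the desired type. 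Hence it suffices to construct a single domain $\Omega$ whose geometry forces the orbit $t \mapsto h^{-1}(h(z_0) + t)$ to accumulate in argument exactly on $[a,b]$.

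First, I would design the boundary of $\Omega$ as a pair of curves that oscillate vertically as $\text{Re}(w) \to +\infty$, producing alternating ``phases''. Each phase should be engineered so that, by conformal distortion, the preimage under $h$ is trapped in a sector of $\H$ with argument close to a prescribed value: phases targeting argument near $a$ alternate with phases targeting argument near $b$, while the continuous transitions between consecutive phases sweep the argument through every intermediate value in $[a,b]$. The vertical amplitude of these oscillations is required to grow unboundedly, which forces $\Omega$ to escape every horizontal strip and therefore guarantees zero hyperbolic step.

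Next, I would verify $\mathrm{Slope}[\{\phi_t\}, z_0] = [a,b]$ in two stages. By the continuous analogue of Step~1 in the proof of Theorem \ref{teo:0HS} (available because $\rho(\phi_t(z), \phi_t(w)) \to 0$ for semigroups of zero hyperbolic step), the slope set is independent of $z_0$; and since $t \mapsto \arg(\phi_t(z_0))$ is continuous, its limit set at $+\infty$ is automatically a closed connected subset of $[0,\pi]$. The remaining content is the quantitative identification of this interval as $[a,b]$, which I would extract from Ahlfors-type distortion inequalities or extremal-length/harmonic-measure estimates relating the geometry of $\Omega$ near $+\infty$ to the position in $\H$ of the preimage $h^{-1}(w)$ for $\text{Re}(w)$ large. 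Such estimates should certify that the argument approaches every value of $[a,b]$ arbitrarily often and never strays outside $[a,b]$ in the limit.

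The hard part will be this quantitative step: the profile of $\Omega$ must be tuned delicately so that $\arg(h^{-1}(h(z_0) + t))$ covers exactly $[a,b]$, neither falling short nor overshooting. This requires sharp conformal-mapping estimates and a careful balance between oscillation amplitude (which simultaneously controls the failure of $\Omega$ to lie in a strip and the extremal slopes $a$ and $b$) and oscillation frequency (which ensures every intermediate slope is visited infinitely often). The degenerate boundary cases, e.g.\ $a = b \in \{0, \pi\}$, where the slope is fully tangential but zero hyperbolic step must still hold, demand particularly cautious profiles.
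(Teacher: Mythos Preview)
The paper does not supply its own proof of this statement: Theorem~\ref{teo:semigroup} is quoted directly from \cite[Theorem 1]{KelgiannisSlope}, with only a remark adjusting notation. So there is no in-paper argument to compare against. That said, the paper's closing remark does describe Kelgiannis's method in outline: he ``used the concept of Koenigs map to propose a geometric description of the slope problem, using several estimates through different harmonic measures.'' Your proposal follows exactly this route---build the semigroup from a translation-invariant Koenigs domain $\Omega$ with an oscillating profile, then read off the slope set via harmonic-measure/distortion estimates---so at the level of strategy you are aligned with the cited source.

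As a sketch your outline is sound: the correspondence between parabolic semigroups with Denjoy--Wolff point $\infty$ and domains $\Omega$ starlike at infinity (i.e.\ $\Omega+t\subset\Omega$ for $t\ge 0$) via the Koenigs map is standard, and the equivalence ``zero hyperbolic step $\Leftrightarrow$ $\Omega$ not contained in a horizontal strip'' is the correct geometric criterion. You are also right that the substantive content lies entirely in the quantitative step---tuning the oscillation of $\partial\Omega$ and proving sharp two-sided estimates on $\arg(h^{-1}(w))$ as $\mathrm{Re}(w)\to+\infty$. Your proposal candidly flags this as ``the hard part'' without carrying it out, so what you have is an accurate roadmap rather than a proof; the harmonic-measure computations in \cite{KelgiannisSlope} (and the earlier constructions in \cite{BetsakosSlope,CDMGSlope} for the case $[a,b]=[0,\pi]$) are where the real work happens.
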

\begin{remark}
The actual statement in \cite[Theorem 1]{KelgiannisSlope} is different, as they work with the set $\pi/2-\mathrm{Slope}[\{\phi_t\},z]$.
\end{remark}
The theory of semigroups and the theory of discrete iteration of holomorphic self-maps are strongly linked. Indeed, one can use the ideas for semigroups in order to give the following result:
\begin{theorem} \label{teo:0HS2}
Given $0 \leq a \leq b \leq \pi$, there exists a parabolic function $f \colon \H \to \H$ of zero hyperbolic step with Denjoy-Wolff point at $\infty$ such that $\mathrm{Slope}[f,z] = [a,b]$ for all $z \in \H$.
\begin{proof}
Using Theorem \ref{teo:semigroup}, fix a parabolic semigroup $\{\phi_t\}$ of zero hyperbolic step whose Denjoy-Wolff point is $\infty$ such that $\mathrm{Slope}[\{\phi_t\},z] = [a,b]$ for all $z \in \H$. Set $f = \phi_1$, and notice that $z_n = f^n(z_0) = \phi_n(z_0)$. We will prove that $\mathrm{Slope}[f,z] = [a,b]$. Clearly, $\mathrm{Slope}[f,z] \subset \mathrm{Slope}[\{\phi_t\},z] = [a,b]$. Then, it is enough to show that $\mathrm{Slope}[f,z] \supset \mathrm{Slope}[\{\phi_t\},z] = [a,b]$. To do so, let $s \in \mathrm{Slope}[\{\phi_t\},z]$. Thus, let $t_k \geq 0$ be an increasing sequence such that $t_k \to + \infty$ and $\arg(\phi_{t_k}(z)) \to s$. We may assume that $t_1 \geq 1$. For each $t_k$, consider $n_k \in \N$ given by $n_k \leq t_k < n_k+1$. We claim that $\rho(\phi_{t_k}(z),\phi_{n_k}(z)) \to 0$ as $k \to +\infty$, and then one can argue as in the proof of Theorem \ref{teo:0HS} to see that there exists a subsequence (again, denoted in the same way) such that $\arg(z_{n_k}) \to s$, from which we have that $s \in \mathrm{Slope}[f,z].$

To prove the claim, let $K \subset \H$ be the compact set given by $K = \{\phi_s(z) : s \in [0,1]\}$, and notice that $\phi_{n_k}(z),\phi_{t_k}(z) \in \phi_{n_k}(K)$ for all $k \in \N$. Thus, it is enough to prove that
$$\sup_{w \in K}\rho(\phi_n(z),\phi_n(w)) \to 0, \quad n \to +\infty.$$
To do this, up to a standard argument of compactness, it is enough to prove that
$$\rho(\phi_n(z),\phi_n(w)) \to 0, \quad n \to + \infty,$$
for all $w \in K$. But this is a consequence of an idea due to Pommerenke \cite[Theorem 1]{PommerenkeHalfPlane} (which can be found in \cite[Theorem 3.1]{Doering}; see also \cite[Corollary 4.6.9.(iv)]{AbateNewBook} for a proof using models).
\end{proof}
\end{theorem}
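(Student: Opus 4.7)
The plan is to leverage the continuous analogue Theorem \ref{teo:semigroup}: given $0 \leq a \leq b \leq \pi$, I fix a parabolic semigroup $\{\phi_t\}$ of zero hyperbolic step with $\mathrm{Slope}[\{\phi_t\},z] = [a,b]$ for every $z \in \H$, and set $f := \phi_1$. The semigroup law forces $f^n(z) = \phi_n(z)$, so $f$ is parabolic of zero hyperbolic step with Denjoy-Wolff point $\infty$. The inclusion $\mathrm{Slope}[f,z] \subseteq \mathrm{Slope}[\{\phi_t\},z] = [a,b]$ is then immediate, since integer-time sampling is a particular case of real-time sampling.

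The substance is the reverse inclusion. Fix $s \in [a,b]$ and an increasing sequence $t_k \to +\infty$ with $\arg(\phi_{t_k}(z)) \to s$; write $t_k = n_k + r_k$ with $n_k \in \N$ and $r_k \in [0,1)$. The strategy is to show that $z_{n_k} = f^{n_k}(z)$ and $\phi_{t_k}(z)$ become asymptotically indistinguishable in the pseudo-hyperbolic metric, so they must share their limit slopes. Concretely, I would set $K := \{\phi_r(z) : r \in [0,1]\}$, a compact subset of $\H$, and observe that $\phi_{t_k}(z) = \phi_{r_k}(\phi_{n_k}(z)) \in \phi_{n_k}(K)$. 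After establishing the needed decay of $\rho(z_{n_k}, \phi_{t_k}(z))$, the closing move is to reapply the geometric computation from Step 1 in the proof of Theorem \ref{teo:0HS}: from $\rho(z_{n_k}, \phi_{t_k}(z)) \to 0$ together with $\arg(\phi_{t_k}(z)) \to s$, passing to a subsequence forces $\arg(z_{n_k}) \to s$, placing $s$ in $\mathrm{Slope}[f,z]$.

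The main obstacle is therefore the uniform contraction
\[
\sup_{w \in K} \rho(\phi_n(z), \phi_n(w)) \longrightarrow 0, \quad n \to +\infty.
\]
For each fixed $w \in K$ this is the zero-hyperbolic-step hypothesis via \cite[Corollary 4.6.9]{AbateNewBook}, but uniformity on the compact set $K$ requires a separate argument. I would proceed by contradiction: if the supremum stays bounded away from zero, extract $w_k \in K$ with $w_k \to w_\ast \in K$ and $\rho(\phi_{n_k}(z), \phi_{n_k}(w_k)) \geq \varepsilon$; by Schwarz-Pick, $\rho(\phi_{n_k}(w_k), \phi_{n_k}(w_\ast)) \leq \rho(w_k, w_\ast) \to 0$, and the triangle inequality for $\rho$ then contradicts the pointwise statement at $w_\ast$. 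Alternatively, Pommerenke's half-plane machinery from \cite[Theorem 1]{PommerenkeHalfPlane} yields this uniform shrinking directly, and either route closes the argument.
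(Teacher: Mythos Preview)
Your proposal is correct and follows essentially the same route as the paper: take $f=\phi_1$ from the Kelgiannis semigroup, note the trivial inclusion, then for the reverse inclusion compare $\phi_{t_k}(z)$ with $\phi_{n_k}(z)$ via the compact set $K=\{\phi_r(z):r\in[0,1]\}$, establish $\sup_{w\in K}\rho(\phi_n(z),\phi_n(w))\to 0$, and finish with the argument from Step~1 of Theorem~\ref{teo:0HS}. Your Schwarz--Pick/subsequence extraction is precisely the ``standard argument of compactness'' that the paper invokes without writing out.
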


\section{Examples with definite slope}
\label{sec:integrable}
In this section, the set $\mathrm{Slope}[f,z]$ will be characterized for a family of parabolic functions that has been previously considered in the literature. To introduce it, let us start by linking \cite[Theorem 6.2.1]{Aaronson} together with \cite[Chapter 5, Lemma 2]{Doering} to obtain the following result:
\begin{theorem}
\label{teo:expr}
Every parabolic function $f \colon \H \to \H$ whose Denjoy-Wolff point is $\infty$ can be uniquely written as
$$f(z) = z + \beta + \int_{\R}\dfrac{1+tz}{t-z}d\mu(t), \quad z \in \H,$$
where $\beta \in \R$ and $\mu$ is a positive finite measure on $\R$.
\end{theorem}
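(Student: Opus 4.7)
The plan is to combine the classical Herglotz--Nevanlinna representation of holomorphic maps with nonnegative imaginary part with the parabolic condition at infinity, which is precisely what the two cited references supply. First I would apply the Herglotz--Nevanlinna theorem (the content of \cite[Theorem 6.2.1]{Aaronson}) to $f$: since $f\colon \H \to \H$ in particular satisfies $\mathrm{Im}(f(z)) \geq 0$, there exist uniquely determined $\alpha \geq 0$, $\beta \in \R$, and a positive finite measure $\mu$ on $\R$ such that
$$f(z) = \alpha z + \beta + \int_{\R}\frac{1+tz}{t-z}\,d\mu(t), \quad z \in \H,$$
with the triple $(\alpha,\beta,\mu)$ unique.

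The heart of the argument is to show that the parabolic condition forces $\alpha = 1$; this is essentially \cite[Chapter 5, Lemma 2]{Doering}. Since $f$ is parabolic with Denjoy--Wolff point $\infty$, translating the condition $\lambda = 1$ back to $\H$ gives $\angle\lim_{z\to\infty} f(z)/z = 1$. From the representation,
$$\frac{f(z)}{z} = \alpha + \frac{\beta}{z} + \frac{1}{z}\int_{\R}\frac{1+tz}{t-z}\,d\mu(t).$$
The middle term vanishes trivially as $z\to\infty$. For the integral term I would restrict to $z = iy$ with $y \geq 1$ (enough, since the angular limit is assumed to exist) and estimate
$$\left|\frac{1}{iy}\cdot\frac{1+ity}{t-iy}\right| = \frac{\sqrt{1+t^2y^2}}{y\sqrt{t^2+y^2}} \leq \sqrt{2},$$
using $1+t^2y^2 \leq (1+t^2)(1+y^2)$ and $\sqrt{t^2+y^2}\geq \sqrt{1+t^2}$ for $y\geq 1$. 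The pointwise limit of this integrand is $0$ for each fixed $t$, and since $\mu$ is finite the dominated convergence theorem yields that the integral term tends to $0$. Thus $f(iy)/(iy)\to \alpha$, and comparing with $1$ gives $\alpha = 1$.

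With $\alpha$ fixed, the uniqueness of $\beta$ and $\mu$ in the stated representation is inherited directly from the uniqueness statement in Herglotz--Nevanlinna: two representations of the same $f$ would produce identical $\alpha$, $\beta$, and $\mu$. The step I expect to require the most care is the passage to the limit inside the integral: the kernel $(1+tz)/(t-z)$ grows linearly in $t$, so one cannot naively dominate it by an $L^1(d\mu)$ function for an arbitrary finite measure; the trick is to exploit the extra factor $1/z$ obtained after dividing by $z$, which is what makes the uniform bound $\sqrt{2}$ above possible.
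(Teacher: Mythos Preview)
Your proposal is correct and follows exactly the route the paper indicates: the paper itself gives no argument beyond citing \cite[Theorem 6.2.1]{Aaronson} for the Herglotz--Nevanlinna representation $f(z)=\alpha z+\beta+\int_{\R}\frac{1+tz}{t-z}\,d\mu(t)$ and \cite[Chapter 5, Lemma 2]{Doering} for the identification $\alpha=\angle\lim_{z\to\infty}f(z)/z$, which equals $1$ in the parabolic case. Your dominated-convergence computation along $z=iy$ is a clean way to make that second step explicit, and your remark that dividing by $z$ is what tames the linear growth of the kernel is exactly the point.
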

In \cite[Theorem 6.4.1]{Aaronson}, Aaronson studied some properties of $f$ under the assumption that $\mu$ is of compact support. For example, if $\beta = \int_{\R}td\mu(t)$, it is shown that $x_n$ is bounded, and so $\text{Slope}[f,z] = \{\pi/2\}$ for all $z \in \H$. If $\beta \neq \int_{\R}td\mu(t)$, then $y_n$ is bounded. Indeed, if $\beta > \int_{\R}td\mu(t)$, then $x_n \to + \infty$ and $\text{Slope}[f,z] = \{0\}$ for all $z \in \H$. Similarly, if $\beta < \int_{\R}td\mu(t)$, then $x_n \to - \infty$ and $\text{Slope}[f,z] = \{\pi\}$ for all $z \in \H$.

In this section, we generalize the former ideas of Aaronson for a larger family of functions. To introduce them, suppose that
$$\int_{\R}\abs{t}d\mu(t) < + \infty.$$
In that case, $f$ can be rewritten as
$$f(z) = z + \beta - \int_{\R}td\mu(t) + p(z), \quad p(z) = \int_{\R}\dfrac{1+t^2}{t-z}d\mu(t), \quad z \in \H.$$
This family of functions has also been considered in \cite{HS}, where its hyperbolic step is characterized in terms of $\mu$ and $\beta$.

Let us begin by stating the following consequence of Lebesgue's Dominated Convergence Theorem: 
\begin{lemma}\cite[Lemma 3.3]{HS}
\label{lemma:anglelimp}
Assume that $\displaystyle\int_{\R}\abs{t}d\mu(t) < \infty$. Then, $\displaystyle\angle\lim_{z \to \infty}p(z) = 0$.
\end{lemma}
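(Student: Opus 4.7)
The plan is to prove the lemma via Lebesgue's Dominated Convergence Theorem applied directly to the integrand defining $p(z)$. Fix a Stolz angle at infinity $\Gamma_{\epsilon}=\{z\in\H\mid \epsilon\le\arg(z)\le\pi-\epsilon\}$, inside which $\mathrm{Im}(z)\ge c\abs{z}$ with $c=\sin\epsilon>0$. For every fixed $t\in\R$ the pointwise limit $(1+t^{2})/(t-z)\to 0$ as $\abs{z}\to\infty$ inside $\Gamma_{\epsilon}$ is immediate, since $\abs{t-z}\ge\mathrm{Im}(z)\to+\infty$. Hence the whole task reduces to producing a $\mu$-integrable dominating function independent of $z$ for $\abs{z}$ large.

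To build such a majorant I would split $\dfrac{1+t^{2}}{t-z}=\dfrac{1}{t-z}+\dfrac{t^{2}}{t-z}$. The first summand is bounded by $1/\mathrm{Im}(z)\le 1/c$ for all $z\in\Gamma_{\epsilon}$ with $\abs{z}\ge 1$, uniformly in $t$. For the second summand I would distinguish three regimes according to the size of $\abs{t}$ with respect to $\abs{z}$: if $\abs{t}\le\abs{z}/4$ then $\abs{t-z}\ge 3\abs{z}/4$ and hence $t^{2}/\abs{t-z}\le\abs{t}/3$; if $\abs{t}\ge 4\abs{z}$ then $\abs{t-z}\ge 3\abs{t}/4$ and $t^{2}/\abs{t-z}\le 4\abs{t}/3$; and in the intermediate regime $\abs{z}/4\le\abs{t}\le 4\abs{z}$ non-tangentiality gives $\abs{t-z}\ge\mathrm{Im}(z)\ge c\abs{z}\ge c\abs{t}/4$, whence $t^{2}/\abs{t-z}\le 4\abs{t}/c$. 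Collecting the three cases yields $t^{2}/\abs{t-z}\le C\abs{t}$ with $C=4/c$, uniformly in $z\in\Gamma_{\epsilon}$ with $\abs{z}\ge 1$.

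Consequently $\abs{(1+t^{2})/(t-z)}\le 1/c+C\abs{t}$ for all such $z$, and this majorant lies in $L^{1}(\mu)$ since $\mu$ is finite and $\int_{\R}\abs{t}\,d\mu(t)<+\infty$. Dominated convergence then forces $p(z)\to 0$ as $\abs{z}\to\infty$ inside $\Gamma_{\epsilon}$, and since $\epsilon>0$ is arbitrary the angular limit at $\infty$ vanishes. The main obstacle I anticipate is the mismatch between the quadratic growth $1+t^{2}$ of the kernel numerator and the merely linear $\mu$-integrability $\int_{\R}\abs{t}\,d\mu(t)<+\infty$ available by hypothesis; the remedy is precisely the three-case split above, which exploits that when $\abs{t}$ approaches $\abs{z}$ the non-tangential condition transfers the size of $\abs{z}$ into a lower bound for $\abs{t-z}$, absorbing one factor of $\abs{t}$ and bringing the estimate back inside the integrability range of $\mu$.
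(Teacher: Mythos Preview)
Your argument is correct. The paper does not supply a proof of this lemma; it cites \cite[Lemma 3.3]{HS} and only remarks that the result is a consequence of Lebesgue's Dominated Convergence Theorem, which is exactly the route you take. Your three-regime splitting to manufacture the $\mu$-integrable majorant $1/c + C\abs{t}$ is a clean way to carry out the DCT step. For comparison, later in the paper (in the proof of Theorem~\ref{teo:tildebetal1}) a single algebraic inequality $\abs{t/(t-z)}\le \abs{z}/\mathrm{Im}(z)$ is used to dominate the related integrand $(1+t^2)/(z(t-z))$; that identity would also give, for $z$ in your Stolz region $\Gamma_\epsilon$, the bound $\abs{t^2/(t-z)}\le \abs{t}\,\abs{z}/\mathrm{Im}(z)\le \abs{t}/c$ in one stroke, avoiding the case split. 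Either way the conclusion is the same.
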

Using the former description, we can now show a result on the set of slopes of convergence:
\begin{theorem}
\label{teo:tildebeta0t2}
Assume that $\displaystyle\int_{\R}t^2d\mu(t) < \infty$ and $\beta = \displaystyle\int_{\R}td\mu(t)$. Then $\mathrm{Slope}[f,z] = \{\pi/2\}$ for all $z \in \H$.
\begin{proof}
First of all, notice that using Lebesgue's Dominated Convergence Theorem, one can see that
$$\angle\lim_{z \to \infty}zp(z) = -\int_{\R}(1+t^2)d\mu(t) < 0,$$
which implies that $\angle\lim_{z \to \infty}\text{arg}(zp(z)) = \pi$. Then, we rewrite
$$f(z) = z - \dfrac{q(z)}{z}, \quad z \in \H,$$
and
\begin{equation}\label{Eq:q}
\angle\lim_{z \to \infty}q(z) > 0.
\end{equation}

With this idea in mind, fix $0 < \alpha < \pi/2$, and define $S = \{z \in \H : \abs{\arg(z)-\pi/2}<\alpha\}$. We will show that there exists $L = L(\alpha)$ such that $f(S^*) \subset S^*$, where $S^* = S \cap \{z \in \H : \text{Im}(z) > L\}.$ To do so, we split $S$ in three parts as follows: define $S_0 = \{z \in \H : \abs{\arg(z)-\pi/2}\leq\alpha/2\} \subset S$, and also $S_- = (S \setminus S_0) \cap \{z \in \H : \text{Re}(z) < 0\}$, $S_+ = S \setminus(S_0 \cup S_-)$.

Note that, due to Lemma \ref{lemma:anglelimp}, we can choose $L_1 > 0$ such that
$$\abs{\dfrac{q(z)}{z}} < 1, \quad z \in S_0, \, \text{Im}(z) > L_1.$$
In that case, it is possible to find $L_1^* \geq L_1$ such that, for all $L > L_1^*$, $f$ maps $S_0^* = S_0 \cap \{z \in \H : \mathrm{Im}(z) > L\}$ into $S^*$.

Similarly, by \eqref{Eq:q}, it is possible to find $L_2 > 0$ such that
$$\abs{\arg(q(z))} \leq \dfrac{\alpha}{4}, \quad z \in S_-, \, \text{Im}(z) > L_2.$$
In that case,
$$\dfrac{\pi}{2}-\dfrac{5\alpha}{4} \leq \arg\left(-\dfrac{q(z)}{z}\right) \leq \dfrac{\pi}{2}-\dfrac{\alpha}{4}, \quad z \in S_-, \, \text{Im}(z) > L_2.$$
In particular, this means that $\arg(f(z)) > \arg(z)$ if $z \in S_-$ with $\text{Im}(z) > L_2$. We may also suppose that
$$\abs{-\dfrac{q(z)}{z}} < 1, \quad z \in S_-, \, \text{Im}(z) > L_2.$$
In that case, it is possible to find $L_2^* \geq L_2$ such that, or all $L > L_2^*$, $f$ maps $S_-^* = S_- \cap \{z \in \H : \mathrm{Im}(z) > L\}$ into $S^*$. Similarly, it is also possible to find $L_3^* > 0$ such that, for all $L > L_3^*$, $f$ maps $S_+^* = S_+ \cap \{z \in \H : \text{Im}(z) > L\}$ into $S^*$.

Summing up, if $L > \max\{L_1^*,L_2^*,L_3^*\}$, then $S^* = S_-^* \cup S_0^* \cup S_+^*$, with $f(S_-^*) \subset S^*$, $f(S_0^*) \subset S^*$, and $f(S_+^*) \subset S^*$. Then $f(S^*) \subset S^*$, as we wanted to prove.

Using the former ideas, one can see that, for any $0 < \alpha < \pi/2$, it is possible to find $y_0 = y_0(\alpha) > 0$ such that $\mathrm{Slope}[f,iy_0] \subset [\pi/2-\alpha,\pi/2+\alpha]$. Using Theorem \ref{teo:0HS}, we conclude that $\mathrm{Slope}[f,z] \subset \{\pi/2\}$ for all $z \in \H$.
\end{proof}
\end{theorem}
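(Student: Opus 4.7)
The plan is to construct an $f$-invariant neighborhood of the positive imaginary ray near $\infty$ and combine this with Theorem \ref{teo:0HS} to pin the set of slopes down to $\{\pi/2\}$.

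Under the hypothesis $\beta = \int_{\R} t\, d\mu(t)$, the expression for $f$ collapses to $f(z) = z + p(z)$. The first step is to compute that
\[
\angle\lim_{z \to \infty} z\, p(z) = -\int_{\R}(1+t^2)\, d\mu(t) =: -C < 0,
\]
by dominated convergence, using $\int t^2\, d\mu < \infty$ together with the bound $\abs{z/(z-t)} \leq 1/\sin(\arg z)$, which is uniformly bounded in any Stolz angle at $\infty$. Hence $q(z) := -z\, p(z)$ has angular limit $C > 0$, and $f(z) = z - q(z)/z$. A direct estimate on the integral representation of $p$ also gives $\mathrm{Im}\, p(z) > 0$, so $\mathrm{Im}(z_n)$ is strictly increasing along any orbit.

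The core step is to show that for each $\alpha \in (0, \pi/2)$ there exists $L = L(\alpha) > 0$ such that the region
\[
V_{\alpha, L} := \{z \in \H : \abs{\arg z - \pi/2} < \alpha, \ \mathrm{Im}(z) > L\}
\]
is $f$-invariant. Preservation of $\mathrm{Im}(z) > L$ is automatic from $\mathrm{Im}\, p > 0$. For the angular condition I would split $V_{\alpha, L}$ into a "buffer" zone $\abs{\arg z - \pi/2} \leq \alpha/2$ and two outer wedges. On the buffer, the crude bound $\abs{f(z) - z} = \abs{q(z)}/\abs{z} = O(1/L)$ forces $\abs{\arg f(z) - \arg z} = O(1/L^2)$, negligible compared to $\alpha/2$ for $L$ large. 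On an outer wedge, say $\arg z \in (\pi/2 + \alpha/2, \pi/2 + \alpha)$, the asymptotic $\arg(-q(z)/z) \to \pi - \arg z$ (following from $q(z) \to C > 0$ real) tells me the increment $f(z) - z$ points into the first quadrant, opposite to $z$ (which lies in the second quadrant); the argument of $f(z)$ therefore drifts \emph{toward} the imaginary axis. Combined with the smallness $\abs{f(z)-z} = O(1/L)$, this keeps $f(z)$ inside $V_{\alpha, L}$. The symmetric wedge is handled analogously.

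The conclusion follows quickly: picking $z_0 = i(L+1) \in V_{\alpha/2, L}$, the whole orbit lies in $V_{\alpha, L}$, so $\mathrm{Slope}[f, z_0] \subset [\pi/2 - \alpha, \pi/2 + \alpha]$. By Theorem \ref{teo:0HS} the slope set is independent of the starting point, and since $\mathrm{Slope}[f, z]$ is always a nonempty closed interval, letting $\alpha \to 0^+$ gives $\mathrm{Slope}[f, z] = \{\pi/2\}$ for every $z$. The main obstacle is the outer-wedge estimate: the potential outward angular drift and the restoring drift are both of order $1/\abs{z}^2$, so invariance cannot be argued from smallness of the increment alone. One must exploit the fact that $q(z)$ has a positive \emph{real} angular limit at $\infty$ to identify the \emph{sign} of the restoring drift, and this is exactly where the strengthened integrability $\int t^2\, d\mu < \infty$ (rather than just $\int \abs{t}\, d\mu < \infty$) is essential.
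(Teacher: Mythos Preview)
Your proposal is correct and follows essentially the same route as the paper's proof: the same asymptotic $q(z)=-zp(z)\to C>0$, the same invariant truncated Stolz region $V_{\alpha,L}$ (the paper's $S^*$), the same three-piece decomposition into a central buffer and two outer wedges, and the same appeal to Theorem \ref{teo:0HS} to pass from one orbit to all orbits. Your explicit observation that $\mathrm{Im}\,p(z)>0$ makes the preservation of $\{\mathrm{Im}\,z>L\}$ immediate is a small clarification the paper leaves implicit, and your closing remark pinpointing why $\int t^2\,d\mu<\infty$ (and not merely $\int|t|\,d\mu<\infty$) is needed for the sign of the restoring drift on the outer wedges is exactly the right diagnosis.
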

The case where $\beta \neq \displaystyle\int_{\R}td\mu(t)$ can be characterized under more general hypotheses:
\begin{theorem}
\label{teo:tildebetal1}
Assume that $\displaystyle\int_{\R}\abs{t}d\mu(t) < \infty$. If $\beta > \displaystyle\int_{\R}td\mu(t)$, then $\mathrm{Slope}[f,z] = \{0\}$ for all $z \in \H$. If $\beta < \displaystyle\int_{\R}td\mu(t)$, then $\mathrm{Slope}[f,z] = \{\pi\}$ for all $z \in \H$.
\begin{proof}
Let us define
$$\tilde{\beta} = \beta - \int_{\R}td\mu(t).$$
Assume $\tilde{\beta} > 0$ (the case $\tilde{\beta} < 0$ follows using similar ideas). We claim that
$$\lim_{\substack{z \to \infty \\ \text{Im}(z) > L}}\dfrac{p(z)}{z} = 0,$$
for any $L > 0$. To prove the claim, we use that if $t \in \R$ and $z = x+iy \in \H$, then
$$\abs{\dfrac{t}{t-z}} \leq \dfrac{\abs{z}}{y}.$$
Notice that this is equivalent to
$$((t-x)^2+y^2)(x^2+y^2)-y^2t^2 = (xt-x^2-y^2)^2 \geq 0.$$
Thus, if $\abs{t} \geq 1$,
$$\dfrac{1}{\abs{z}}\dfrac{1+t^2}{\abs{t-z}} \leq \dfrac{1}{\abs{z}}\dfrac{2t^2}{\abs{t-z}} \leq \dfrac{2}{y}\abs{t} \leq \dfrac{2}{L}\abs{t}, \quad x \in \R, \, y > L.$$
Therefore, the claim follows from Lebesgue's Dominated Convergence Theorem.

With this in mind, fix $0 < \alpha < \pi/4$ and define $S = \{z \in \H : \arg(z) < 2\alpha\}$. Split $S$ in two domains as follows: define $S_u = \{z \in \H : \alpha \leq \arg(z) < 2\alpha\}$ and $S_l = S \setminus S_u$. Note that $S_u$ is a non-tangential region at $\infty$. Then, by Lemma \ref{lemma:anglelimp}, we have
$$\lim_{\substack{z \to \infty \\ z \in S_u}}\arg(\tilde{\beta}+p(z)) = 0.$$
In that case, it is possible to find $L_1 > 0$ such that $\arg(f(z)-z) = \arg(\tilde{\beta}+p(z)) < \alpha$, if $z \in S_u$ with $\text{Im}(z) > L_1$. Thus, $\arg(f(z)) < 2\alpha$, because $S$ is a cone.

Similarly, one has
$$\lim_{\substack{z \to \infty \\ z \in S_l, \, \text{Im}(z) > L}}\left(\dfrac{\tilde{\beta}}{z}+\dfrac{p(z)}{z}\right) = 0,$$
for any $L > 0$. Then, it is possible to find $L_2 > 0$ such that
$$\arg\left(1+\dfrac{\tilde{\beta}}{z}+\dfrac{p(z)}{z}\right) < \alpha, \quad z \in S_l, \, \text{Im}(z) > L_2.$$
Thus,
$$\arg(f(z)) = \arg(z) + \arg\left(1+\dfrac{\tilde{\beta}}{z}+\dfrac{p(z)}{z}\right) < 2\alpha, \quad z \in S_l, \, \text{Im}(z) > L_2.$$

We may suppose that $L_1 = L_2$. Then, define $S^* = S \cap \{z \in \H : \text{Im}(z) > L_1\}$, $S_u^* = S_u \cap \{z \in \H : \text{Im}(z) > L_1\}$ and $S_l^* = S_l  \cap \{z \in \H : \text{Im}(z) > L_1\}$. It is clear that $S^* = S_u^* \cup S_l^*$, and we just justified that $f(S_u^*) \subset S^*$ and $f(S_l^*) \subset S^*$. So, $f(S^*) \subset S^*$.

In that case, for any $0 < \alpha < \pi/4$, one could choose $z_0 \in S^* = S^*(\alpha)$ and then it is clear that $\mathrm{Slope}[f,z_0] \subset [0,2\alpha]$. Then, by Theorem \ref{teo:0HS}, we conclude that $\mathrm{Slope}[f,z] = \{0\}$ for all $z \in \H$.
\end{proof}
\end{theorem}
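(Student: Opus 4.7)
The plan is to exploit Theorem \ref{teo:0HS}, which tells us the slope set is a closed interval independent of the starting point; so it suffices to produce one orbit whose arguments cluster in any prescribed neighborhood of $0$ (resp.\ of $\pi$). Writing $\tilde{\beta} = \beta - \int_{\R} t\,d\mu(t)$ and $f(z) = z + \tilde{\beta} + p(z)$, I will treat the case $\tilde{\beta} > 0$; the case $\tilde{\beta} < 0$ is handled symmetrically by replacing the cone at $0$ with its mirror cone at $\pi$.

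For small $\alpha \in (0, \pi/4)$ and large $L = L(\alpha) > 0$ to be chosen, my candidate forward-invariant region is
\[
S^*(\alpha) = \bigl\{z \in \H : 0 < \arg z < 2\alpha,\ \text{Im}(z) > L\bigr\}.
\]
The half-plane part $\{\text{Im}(z) > L\}$ is automatically preserved, because $\text{Im}\bigl((t-z)^{-1}\bigr) = \text{Im}(z)/|t-z|^2 > 0$ forces $\text{Im}(p(z)) > 0$ and hence $\text{Im}(f(z)) > \text{Im}(z)$. So the substance is to prove $\arg f(z) < 2\alpha$ whenever $z \in S^*(\alpha)$.

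The key step is to split $S^*(\alpha)$ into a non-tangential upper part $S_u = \{\alpha \le \arg z < 2\alpha\}$ and a possibly tangential lower part $S_l = \{0 < \arg z < \alpha\}$ (each intersected with $\{\text{Im}(z) > L\}$) and control each separately. On $S_u$, Lemma \ref{lemma:anglelimp} yields $p(z) \to 0$ as $|z| \to \infty$, so the translation vector $\tilde{\beta} + p(z)$ is asymptotically horizontal (and positive, since $\tilde{\beta} > 0$); adding such a vector to a point in the cone tilts its argument toward $0$, so $f(z) = z + \tilde{\beta} + p(z)$ lies in $\{\arg < 2\alpha\}$ for $L$ large. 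On $S_l$, Lemma \ref{lemma:anglelimp} does not apply directly; instead I would write $f(z)/z = 1 + (\tilde{\beta} + p(z))/z$ and show $p(z)/z \to 0$ uniformly on $\{\text{Im}(z) > L\}$. This uses the elementary inequality $|t/(t-z)| \le |z|/\text{Im}(z)$ (which reduces to $(tx - |z|^2)^2 \ge 0$), combined with the hypothesis $\int_{\R} |t|\, d\mu(t) < \infty$, to justify dominated convergence. Then $\arg\bigl(1 + (\tilde{\beta}+p(z))/z\bigr) < \alpha$ for $L$ large, whence $\arg f(z) = \arg z + \arg(f(z)/z) < \alpha + \alpha = 2\alpha$.

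The main technical obstacle is precisely this uniform estimate on $S_l$: we have no non-tangential control there, so we must extract a quantitative $1/|z|$ gain from the factor $(t-z)^{-1}$ and leverage the integrability of $|t|$ against $\mu$. Once $f(S^*(\alpha)) \subset S^*(\alpha)$ is established, picking any $z_0 \in S^*(\alpha)$ yields $\mathrm{Slope}[f, z_0] \subset [0, 2\alpha]$, and Theorem \ref{teo:0HS} transfers this to every initial point. Letting $\alpha \to 0$ concludes $\mathrm{Slope}[f, z] = \{0\}$ for all $z \in \H$.
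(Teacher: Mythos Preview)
Your proposal is correct and follows essentially the same route as the paper: the same invariant cone $S^*(\alpha)$, the same split into a non-tangential part $S_u$ (handled via Lemma~\ref{lemma:anglelimp}) and a tangential part $S_l$ (handled via the inequality $|t/(t-z)|\le |z|/\mathrm{Im}(z)$ together with dominated convergence to get $p(z)/z\to 0$), and the same appeal to Theorem~\ref{teo:0HS} to pass from one orbit to all orbits. Your explicit remark that $\mathrm{Im}(p(z))>0$ forces $\mathrm{Im}(f(z))>\mathrm{Im}(z)$, so that the condition $\mathrm{Im}(z)>L$ is automatically preserved, is a point the paper leaves implicit.
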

In contrast with Theorem \ref{teo:tildebetal1}, Theorem \ref{teo:tildebeta0t2} does not hold under the more general hypothesis given by
$$\int_{\R}\abs{t}d\mu(t) < + \infty.$$
Indeed, for each $s \in (0,\pi)$, we now present a function $f \colon \H \to \H$ that satisfies this hypothesis but $\text{Slope}[f,z]=\{s\}$. To do this, let $m$ denotes the Lebesgue measure on $\R$, and let us proceed with the following examples:
\begin{example}
\label{exp:pi2}
Let $\mu$ be the positive finite measure on $\R$ given by
$$\dfrac{d\mu}{dm}(t) = \dfrac{1}{(1+t^2)t}\chi_{(1,+\infty)}(t), \quad t \in \R.$$
Note that
$$\int_{\R}\abs{t}d\mu(t) < +\infty, \quad \int_{\R}t^2d\mu(t) = + \infty.$$

With the former notation, let $f \colon \H \to \H$ be given by
$$f(z) = z + p(z), \quad z \in \H,$$
where
$$p(z) = \int_{\R}\dfrac{1+t^2}{t-z}d\mu(t) = \int_{(1,+\infty)}\dfrac{dt}{(t-z)t} = -\dfrac{\log(1-z)}{z}.$$

One can check that $\lim_{z \to \infty}\arg(zp(z)) = \pi$. Then, following the same ideas as in the proof of Theorem \ref{teo:tildebeta0t2}, it is possible to see that $\mathrm{Slope}[f,z] = {\pi/2}$ for all $z \in \H$.
\end{example}
\begin{remark}
Following the notation on Theorem \ref{teo:expr}, note that $\mathrm{Slope}[f,z] = {\pi/2}$ if $\mu$ is a symmetric measure and $\beta = 0$; see the discussion preceding the Proposition 4.1 in \cite{HS}. However, Example \ref{exp:pi2} shows that this is not necessary, even in the case where
$$\int_{\R}\abs{t}d\mu(t) < +\infty, \quad \int_{\R}t^2d\mu(t) = + \infty.$$
\end{remark}
\begin{example}
For $0 < \alpha < 1$, let $\mu_{\alpha}$ be the positive finite measure on $\R$ given by
$$\dfrac{d\mu_{\alpha}}{dm}(t) = \dfrac{1}{(1+t^2)t^{\alpha}}\chi_{(0,+\infty)}(t), \quad t \in \R.$$

With the former notation, let $f_{\alpha} \colon \H \to \H$ be given by
$$f_{\alpha}(z) = z + p_{\alpha}(z), \quad z \in \H,$$
where
$$p_{\alpha}(z) = \int_{(0,+\infty)}\dfrac{dt}{(t-z)t^\alpha}.$$
This integral can be explicitly calculated by using Cauchy's Residue Theorem. In order to do so, we use some curves given in the following figure:
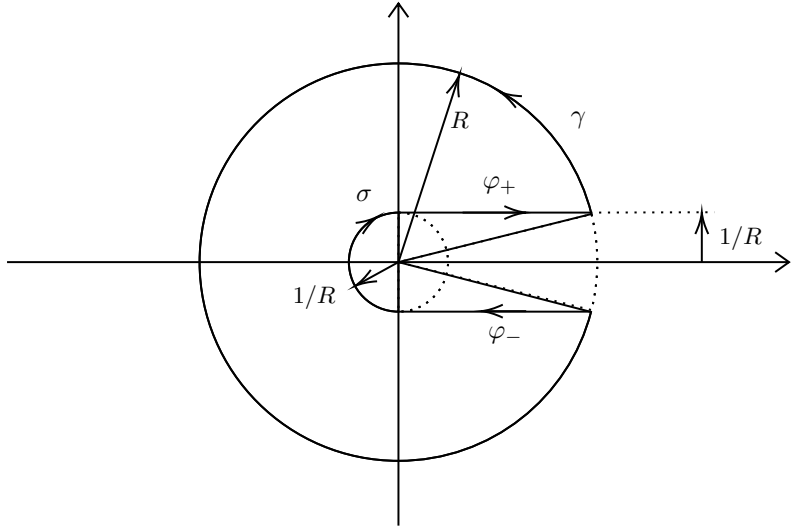
\begin{figure}[H]
\centering
\tikzset{every picture/.style={line width=0.75pt}}
\begin{tikzpicture}[x=0.75pt,y=0.75pt,yscale=-1,xscale=1]
\draw  (10,139.6) -- (404.4,139.6)(207.4,9) -- (207.4,272.6) (397.4,134.6) -- (404.4,139.6) -- (397.4,144.6) (202.4,16) -- (207.4,9) -- (212.4,16)  ;
\draw  [draw opacity=0] (304.52,164.74) .. controls (293.37,207.97) and (254.11,239.9) .. (207.4,239.9) .. controls (152.01,239.9) and (107.1,194.99) .. (107.1,139.6) .. controls (107.1,84.21) and (152.01,39.3) .. (207.4,39.3) .. controls (254.42,39.3) and (293.88,71.65) .. (304.74,115.31) -- (207.4,139.6) -- cycle ; \draw   (304.52,164.74) .. controls (293.37,207.97) and (254.11,239.9) .. (207.4,239.9) .. controls (152.01,239.9) and (107.1,194.99) .. (107.1,139.6) .. controls (107.1,84.21) and (152.01,39.3) .. (207.4,39.3) .. controls (254.42,39.3) and (293.88,71.65) .. (304.74,115.31) ; 
\draw  [draw opacity=0][dash pattern={on 0.84pt off 2.51pt}] (304.76,115.4) .. controls (306.68,123.15) and (307.7,131.26) .. (307.7,139.6) .. controls (307.7,147.94) and (306.68,156.04) .. (304.76,163.79) -- (207.4,139.6) -- cycle ; \draw  [dash pattern={on 0.84pt off 2.51pt}] (304.76,115.4) .. controls (306.68,123.15) and (307.7,131.26) .. (307.7,139.6) .. controls (307.7,147.94) and (306.68,156.04) .. (304.76,163.79) ;  
\draw  [draw opacity=0] (207.4,164.6) .. controls (207.4,164.6) and (207.4,164.6) .. (207.4,164.6) .. controls (193.59,164.6) and (182.4,153.41) .. (182.4,139.6) .. controls (182.4,125.79) and (193.59,114.6) .. (207.4,114.6) .. controls (207.46,114.6) and (207.53,114.6) .. (207.59,114.6) -- (207.4,139.6) -- cycle ; \draw   (207.4,164.6) .. controls (207.4,164.6) and (207.4,164.6) .. (207.4,164.6) .. controls (193.59,164.6) and (182.4,153.41) .. (182.4,139.6) .. controls (182.4,125.79) and (193.59,114.6) .. (207.4,114.6) .. controls (207.46,114.6) and (207.53,114.6) .. (207.59,114.6) ;  
\draw  [draw opacity=0][dash pattern={on 0.84pt off 2.51pt}] (207.4,114.6) .. controls (221.21,114.6) and (232.4,125.79) .. (232.4,139.6) .. controls (232.4,153.41) and (221.21,164.6) .. (207.4,164.6) -- (207.4,139.6) -- cycle ; \draw  [dash pattern={on 0.84pt off 2.51pt}] (207.4,114.6) .. controls (221.21,114.6) and (232.4,125.79) .. (232.4,139.6) .. controls (232.4,153.41) and (221.21,164.6) .. (207.4,164.6) ;  
\draw    (207.4,114.6) -- (304.1,114.6) ;
\draw    (207.4,164.6) -- (304.6,164.6) ; 
\draw    (207.4,139.6) -- (237.52,46.9) ;
\draw [shift={(238.13,45)}, rotate = 108] [color={rgb, 255:red, 0; green, 0; blue, 0 }  ][line width=0.75]    (10.93,-3.29) .. controls (6.95,-1.4) and (3.31,-0.3) .. (0,0) .. controls (3.31,0.3) and (6.95,1.4) .. (10.93,3.29)   ;
\draw    (207.4,139.6) -- (187.22,150.7) ;
\draw [shift={(185.47,151.67)}, rotate = 331.18] [color={rgb, 255:red, 0; green, 0; blue, 0 }  ][line width=0.75]    (10.93,-3.29) .. controls (6.95,-1.4) and (3.31,-0.3) .. (0,0) .. controls (3.31,0.3) and (6.95,1.4) .. (10.93,3.29)   ;
\draw  [dash pattern={on 0.84pt off 2.51pt}]  (304.1,114.6) -- (366.71,114.42) ;
\draw    (360.4,139.93) -- (360.4,116.47) ;
\draw [shift={(360.4,114.47)}, rotate = 90] [color={rgb, 255:red, 0; green, 0; blue, 0 }  ][line width=0.75]    (10.93,-3.29) .. controls (6.95,-1.4) and (3.31,-0.3) .. (0,0) .. controls (3.31,0.3) and (6.95,1.4) .. (10.93,3.29)   ;
\draw    (262.27,55.67) -- (260.24,54.6) ;
\draw [shift={(258.47,53.67)}, rotate = 27.76] [color={rgb, 255:red, 0; green, 0; blue, 0 }  ][line width=0.75]    (10.93,-3.29) .. controls (6.95,-1.4) and (3.31,-0.3) .. (0,0) .. controls (3.31,0.3) and (6.95,1.4) .. (10.93,3.29)   ;
\draw    (192.13,119.67) -- (195.12,117.75) ;
\draw [shift={(196.8,116.67)}, rotate = 147.26] [color={rgb, 255:red, 0; green, 0; blue, 0 }  ][line width=0.75]    (10.93,-3.29) .. controls (6.95,-1.4) and (3.31,-0.3) .. (0,0) .. controls (3.31,0.3) and (6.95,1.4) .. (10.93,3.29)   ;
\draw    (239.75,114.6) -- (269.75,114.6) ;
\draw [shift={(271.75,114.6)}, rotate = 180] [color={rgb, 255:red, 0; green, 0; blue, 0 }  ][line width=0.75]    (10.93,-3.29) .. controls (6.95,-1.4) and (3.31,-0.3) .. (0,0) .. controls (3.31,0.3) and (6.95,1.4) .. (10.93,3.29)   ;
\draw    (271.83,164.35) -- (251.1,164.35) ;
\draw [shift={(249.1,164.35)}, rotate = 360] [color={rgb, 255:red, 0; green, 0; blue, 0 }  ][line width=0.75]    (10.93,-3.29) .. controls (6.95,-1.4) and (3.31,-0.3) .. (0,0) .. controls (3.31,0.3) and (6.95,1.4) .. (10.93,3.29)   ;

\draw (231.93,62.4) node [anchor=north west][inner sep=0.75pt]  [font=\small]  {$R$};
\draw (367.6,119.73) node [anchor=north west][inner sep=0.75pt]  [font=\small]  {$1/R$};
\draw (152.6,149.73) node [anchor=north west][inner sep=0.75pt]  [font=\small]  {$1/R$};
\draw (292.85,62.2) node [anchor=north west][inner sep=0.75pt]    {$\gamma $};
\draw (184.6,101.7) node [anchor=north west][inner sep=0.75pt]    {$\sigma $};
\draw (251.1,170.75) node [anchor=north west][inner sep=0.75pt]    {$\varphi _{-}$};
\draw (248.43,94.95) node [anchor=north west][inner sep=0.75pt]    {$\varphi _{+}$};
\end{tikzpicture}

\caption{Curves $\gamma$, $\sigma$, $\varphi_+$ and $\varphi_-$.}
\end{figure}

Note that $\gamma$ is an arc contained in the circumference of center at $0$ and radius $R$, while $\sigma$ is another arc contained in the circumference of center also at $0$ but with radius $1/R$. Both arcs are joined by the horizontal segments $\varphi_+$ and $\varphi_-$, whose distance to $\R$ is $1/R$. Define $\Gamma = \gamma \cup \varphi_- \cup \sigma \cup \varphi_+$ as well.

Given $z \in \H$, consider
$$F_z(w) = \dfrac{1}{(w-z)w^\alpha}, \quad w \in \C \setminus [0,+\infty)$$
where $w^\alpha$ is defined by using the branch of the argument function from $\C \setminus [0,+\infty)$ onto $(0,2\pi)$. Note that $\text{Res}(F_z,z) = 1/z^{\alpha}$. Therefore, given $z \in \H$ and as long as $R > 0$ is big enough, Cauchy's Residue Theorem implies that
$$\int_{\Gamma}F_z(w)dw = \dfrac{2\pi i}{z^{\alpha}}.$$
On the other hand,
$$\abs{\int_{\sigma}F_z(w)dw} \leq \dfrac{2\pi}{(\abs{z}-1/R)R^{1-\alpha}},$$
and so
$$\lim_{R \to + \infty}\int_{\sigma}F_z(w)dw = 0.$$
Similarly,
$$\lim_{R \to + \infty}\int_{\gamma}F_z(w)dw = 0.$$

Moreover, notice that $w^{\alpha} = \exp(\alpha\log(\abs{w}))\exp(i\alpha\arg(w))$, where $\arg \colon \C \setminus [0,+\infty) \to (0,2\pi)$, and then
$$\lim_{R \to + \infty}\int_{\varphi_+}F_z(w)dw = p_{\alpha}(z), \quad \lim_{R \to + \infty}\int_{\varphi_-}F_z(w)dw = -\dfrac{1}{\exp(i2\pi\alpha)}p_{\alpha}(z).$$

Summing up, one deduces that
$$p_{\alpha}(z) = \dfrac{\pi\exp(i\pi\alpha)}{\sin(\pi\alpha)z^{\alpha}}, \quad z \in \H.$$
From this, it is possible to check that $\arg(p_{\alpha}(r\exp(i\theta)) = \theta$ for $\theta = \pi\alpha/(1+\alpha)$. This means that the half-line with principal argument given by $\pi\alpha/(1+\alpha)$ is invariant by $f$, so that Theorem \ref{teo:0HS} assures that
$$\mathrm{Slope}[f_{\alpha},z] = \left\lbrace \dfrac{\pi\alpha}{1+\alpha}\right\rbrace, \quad z \in \H.$$
\end{example}
\begin{example}
Similarly as before, for $0 < \alpha < 1$, let $\mu_{\alpha}$ be the positive finite measure on $\R$ given by
$$\dfrac{d\mu_{\alpha}}{dm}(t) = \dfrac{1}{(1+t^2)\abs{t}^{\alpha}}\chi_{(-\infty,0)}(t), \quad t \in \R.$$
Following the same ideas and notations of the previous example, it is possible to see that
$$\mathrm{Slope}[f_{\alpha},z] = \left\lbrace \pi - \dfrac{\pi\alpha}{1+\alpha} \right\rbrace, \quad z \in \H.$$
\end{example}

\section{Examples with undefinite slope}
The goal of this section is to explicitly construct parabolic functions $f \colon \H \to \H$ of zero hyperbolic step whose Denjoy-Wolff point is $\infty$ such that $\text{Slope}[f,z] = [0,\pi]$ or $\text{Slope}[f,z] = [0,\pi/2]$. To achieve this, consider $\{a_k\}\subset (0,+\infty)$, $\{\gamma_k\} \subset \R\setminus\{0\}$ two sequences satisfying
\begin{equation}
\label{cond:tinl1}
\sum_{k = 1}^{+\infty}\dfrac{a_k}{\abs{\gamma_k}} < +\infty,
\end{equation}
and define the holomorphic function $f \colon \H \to \H$ given by
$$f(z) = z + \sum_{k=1}^{+\infty}p_k(z), \quad p_k(z) = \dfrac{a_k}{\gamma_k-z}, \quad z \in \H.$$
Following the notation on Theorem \ref{teo:expr}, it is possible to check that
$$f(z) = z + \beta + \int_{\R}\dfrac{1+tz}{t-z}d\mu(t), \quad z \in \H,$$
where
$$\mu = \sum_{k = 1}^{+\infty}\dfrac{a_k}{1+\gamma_k^2}\delta_{\gamma_k}, \quad \beta = \int_{\R}td\mu(t), \quad \int_{\R} \abs{t} d\mu(t) < +\infty.$$
Our goal is to show that, under suitable conditions on $\{a_k\}$ and $\{\gamma_k\}$, one has $\text{Slope}[f,z] = [0,\pi]$ or $\text{Slope}[f,z] = [0,\pi/2]$.

Let us also recall that the orbit of every $z_0 \in \H$ is denoted as
$$z_{n+1} = f(z_n), \quad z_n = x_n+iy_n,$$
where $\{y_n\}$ is a non-decreasing sequence since $\mathrm{Im}(f(z)-z) \geq 0$ for all $z \in \H$.

\subsection{$\text{Slope}[f,z] = [0,\pi]$} Suppose that
\begin{align}
z_0 = i, \label{cond:initialpoint} \\
\gamma_k = (-1)^k\abs{\gamma_k}, \quad \abs{\gamma_1} \geq 1, \quad 4\abs{\gamma_k} \leq \abs{\gamma_{k+1}},  \label{cond:gammak} \\
\sum_{l=1}^k a_l \leq \dfrac{\abs{\gamma_k}}{4k}, \quad \sum_{l = k+1}^{+\infty}\dfrac{a_l}{\abs{\gamma_l}} \leq \dfrac{\abs{\gamma_k}}{8k}, \label{cond:small} \\
\sum_{l=1}^{k-1}a_l \leq \dfrac{a_k}{80k}, \quad \sum_{l=k+1}^{+\infty}\dfrac{a_l}{\abs{\gamma_l}} \leq \dfrac{a_k}{160k\abs{\gamma_k}}, \label{cond:control}
\end{align}

For example, these conditions are achieved if $a_k = C_1(k!)^2$ and $\abs{\gamma_k} = C_2(k!)^4$ for big enough $C_1,C_2 > 0$.

In order to clarify the exposition, note that \eqref{cond:gammak} implies that
\begin{equation}
\label{cond:incNk}
\abs{\gamma_k} \uparrow + \infty, \quad \dfrac{\abs{\gamma_k}}{k} \uparrow + \infty, \quad k \to + \infty,
\end{equation}
and \eqref{cond:tinl1} and \eqref{cond:incNk} imply that
\begin{equation}
\label{cond:limLog}
\gamma_k^2/a_k \to +\infty, \quad k \to + \infty.
\end{equation}

\begin{lemma}
\label{lemma:orbitcontrol}
Assume \eqref{cond:tinl1}-\eqref{cond:control}. For any $k \in \N$ and any $n \in \N$ such that $y_n \leq \abs{\gamma_k}/k$, one has $\abs{x_n} \leq 2\abs{\gamma_k}$.
\begin{proof}
Suppose that there exist $k,N \in \N$ with $\abs{x_{N+1}} > 2\abs{\gamma_k}$, $\abs{x_n} \leq 2\abs{\gamma_k}$ for $n \leq N$ and $y_{N+1} \leq \abs{\gamma_k}/k$. Define $\Omega = \{x+iy \in \C : \abs{x}\leq 2\abs{\gamma_k}, \, y \geq 1\}$, and notice that $z_n \in \Omega$ for all $n \leq N$ (see \eqref{cond:initialpoint}). Moreover, if $z =x+iy \in \Omega$ and $l \leq k$, then $\abs{\text{Re}(p_l(z))} \leq  \abs{p_l(z)} \leq a_l$, as $y \geq 1$.

Similarly, if $l > k$, then
$$\abs{\text{Re}(p_l(z))}  \leq  \abs{p_l(z)} \leq \dfrac{a_l}{\abs{\gamma_l-x}} \leq \dfrac{a_l}{\abs{\gamma_l}-2\abs{\gamma_k}} \leq \dfrac{2a_l}{\abs{\gamma_l}},$$
by \eqref{cond:gammak}. Using these inequalities and \eqref{cond:small}, one has
$\abs{\text{Re}(f(z_N)-z_N)} \leq \abs{\gamma_k}/2$, and then $\abs{x_N} \geq 3\abs{\gamma_k}/2 > \abs{\gamma_k}$.

Indeed, if $x_{N+1} < -2\abs{\gamma_k}$ (and similarly if $x_{N+1} > 2\abs{\gamma_k}$), then $-2\abs{\gamma_k} \leq x_N \leq -3\abs{\gamma_k}/2$. Thus, $\text{Re}(p_l(z_N)) > 0$ for all $l \leq k$. Moreover,
$$\text{Re}(p_k(z_N)) = \dfrac{a_k(\gamma_k-x_N)}{(\gamma_k-x_N)^2+y_N^2} \geq \dfrac{a_k}{20\abs{\gamma_k}}.$$
Therefore, by \eqref{cond:control}, $\text{Re}(f(z_N)-z_N) > 0$, and then $x_{N+1} > x_N$. Note this is a contradiction.
\end{proof}
\end{lemma}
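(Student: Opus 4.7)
The plan is to argue by contradiction via a ``first escape time''. Assume there exist $k, N \in \N$ with $|x_n| \leq 2|\gamma_k|$ for every $n \leq N$, with $y_{N+1} \leq |\gamma_k|/k$, but $|x_{N+1}| > 2|\gamma_k|$. Since $\{y_n\}$ is non-decreasing with $y_0 = 1$ (see \eqref{cond:initialpoint}), the orbit up to step $N$ lies in the strip $\Omega = \{x+iy : |x| \leq 2|\gamma_k|,\ y \geq 1\}$. The whole proof will consist of estimating the real parts of the pieces $p_l(z_N)$ in two different regimes, $l \leq k$ and $l > k$, first to limit how far $x_n$ can jump in one step, and then to show the jump must actually go in the ``wrong'' direction relative to the assumed escape.

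First I would bound the single-step real displacement. For $l \leq k$ and $z \in \Omega$ the inequality $y \geq 1$ gives $|p_l(z)| \leq a_l$, while for $l > k$ the separation $|\gamma_l| \geq 4|\gamma_k|$ from \eqref{cond:gammak} yields $|p_l(z)| \leq a_l/(|\gamma_l| - 2|\gamma_k|) \leq 2a_l/|\gamma_l|$. Summing these estimates and invoking \eqref{cond:small} leads to $|\mathrm{Re}(f(z_N)-z_N)| \leq |\gamma_k|/2$. Combined with $|x_{N+1}| > 2|\gamma_k|$, this forces $|x_N| > 3|\gamma_k|/2$, so $x_N$ already sits very close to the boundary of $\Omega$ and on the same side as $x_{N+1}$.

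Now assume without loss of generality $x_{N+1} < -2|\gamma_k|$, hence $x_N \in [-2|\gamma_k|, -3|\gamma_k|/2]$. For every $l \leq k$ one has $|\gamma_l| \leq |\gamma_k|$ by \eqref{cond:gammak}, so $\gamma_l - x_N \geq -|\gamma_k| + 3|\gamma_k|/2 = |\gamma_k|/2 > 0$; hence each $\mathrm{Re}(p_l(z_N))$ is positive, producing a ``restoring force'' pulling the orbit back into $\Omega$. The dominant such term is $p_k$: the denominator $(\gamma_k - x_N)^2 + y_N^2$ is at most $10\gamma_k^2$ (using $|\gamma_k - x_N| \leq 3|\gamma_k|$ and $y_N \leq |\gamma_k|/k \leq |\gamma_k|$), while the numerator $a_k(\gamma_k - x_N) \geq a_k|\gamma_k|/2$, so $\mathrm{Re}(p_k(z_N)) \geq a_k/(20|\gamma_k|)$.

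Finally the tails $l > k$ may be negative, but by $|p_l(z_N)| \leq 2a_l/|\gamma_l|$ and \eqref{cond:control} their cumulative modulus is at most $a_k/(80k|\gamma_k|)$, much smaller than $a_k/(20|\gamma_k|)$. So $\mathrm{Re}(f(z_N) - z_N) > 0$, which gives $x_{N+1} > x_N \geq -2|\gamma_k|$, contradicting $x_{N+1} < -2|\gamma_k|$. The main obstacle is the bookkeeping near the boundary: one must use the alternating sign pattern $\gamma_k = (-1)^k|\gamma_k|$ so that $p_k$ is a restoring term precisely when $x_N$ drifts to the wrong side, and match the growth conditions \eqref{cond:small}--\eqref{cond:control} so that this single restoring term dominates both the accumulated earlier terms and the tail.
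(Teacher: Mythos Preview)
Your argument is correct and follows the paper's proof essentially step by step: the same first-escape setup, the same two-regime bound on $|p_l(z)|$ via \eqref{cond:small} to pin $x_N$ near the boundary, and the same lower bound $\mathrm{Re}(p_k(z_N))\ge a_k/(20|\gamma_k|)$ combined with \eqref{cond:control} to force the real part of the increment to have the wrong sign. One small remark: your closing comment that the alternating sign pattern $\gamma_k=(-1)^k|\gamma_k|$ is what makes $p_k$ a restoring term is not quite the mechanism here---the restoring effect for \emph{this} lemma comes only from $|\gamma_l|\le|\gamma_k|$ for $l\le k$, so that $\gamma_l-x_N$ has the sign of $-x_N$ once $|x_N|\ge 3|\gamma_k|/2$; the alternation is used later to produce both limit arguments $0$ and $\pi$, not to control the orbit in this lemma.
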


\begin{theorem}
Assume \eqref{cond:tinl1}-\eqref{cond:control}. Then, $\mathrm{Slope}[f,z_0] = [0,\pi]$.
\begin{proof}
It suffices to find two subsequences $\{z_{n_k}\}$ and $\{z_{m_k}\}$ with $\arg(z_{n_k}) \to 0$ and $\arg(z_{m_k}) \to \pi$. As the construction is similar for both of them, we will only show how to find the sequence $\{z_{n_k}\}$.

For each $k \in \N$ with $\gamma_k > 0$ (see \eqref{cond:gammak}) there exists $N_k \in \N$ such that $y_{N_k} \leq \gamma_k/k < y_{N_k+1}$. Note that the map $k \mapsto N_k$ may not be injective. 

Consider the set $A = \{k \in \N : x_{N_k} \geq \gamma_k/2 > 0\}$. For every $k \in A$ one has $\arg(z_{N_k}) \leq \arctan(2/k)$. Then, if $A$ is infinite, we can find a sequence with $\arg(z_{n_k}) \to 0$ as $k \to + \infty$.

On the other hand, if $A$ is finite, it is possible to kind $k_0 \in \N$ such that for every $k \in \N$ with $k \geq k_0$ and $\gamma_k > 0$ one has
$$y_{N_k} \leq \gamma_k/k < y_{N_k+1}, \quad -2\gamma_k \leq x_{N_k} \leq \dfrac{\gamma_k}{2},$$
where Lemma \ref{lemma:orbitcontrol} has been used. We claim that there exist $C > 1$ and $k_1 \in \N$, $k_1 \geq k_0$, such that for all $k \in \N$ with $k \geq k_1$ and $\gamma_k > 0$ there exists $M_k \geq N_k$ with $y_{M_k} \leq C\gamma_k/k$ and $x_{M_k} \geq \gamma_k/2$. In particular, $\arg(z_{M_k}) \leq \arctan(2C/k)$. Therefore, the existence of a sequence with $\arg(z_{n_k}) \to 0$ as $k \to + \infty$ follows from the claim.

To prove the claim, fix $k \in \N$ with $k \geq k_0$ and $\gamma_k > 0$, define $\Omega = \{x+iy \in \C : -2\gamma_k \leq x \leq \gamma_k/2, \, 1 \leq y \leq \gamma_k/k\}$, and notice that $z_n \in \Omega$ for $n \leq N_k$. 

If $z = x+iy \in \Omega$ and $l \leq k$, then $\text{Im}(p_l(z)) \leq  \abs{p_l(z)} \leq a_l$. Similarly, if $l > k$, then
$$\text{Im}(p_l(z))  \leq  \abs{p_l(z)} \leq \dfrac{a_l}{\abs{\gamma_l}-\abs{x}} \leq \dfrac{a_l}{\abs{\gamma_l}-2\gamma_k} \leq \dfrac{2a_l}{\abs{\gamma_l}},$$
by \eqref{cond:gammak}. Using these inequalities and \eqref{cond:small}, one has
$\abs{\text{Im}(f(z_{N_k})-z_{N_k})} \leq \gamma_k/k$, and then $\gamma_k/k < y_{N_k+1} \leq 2\gamma_k/k$.

Let us now define $\Omega^* = \{z = x+iy \in \C : -2\gamma_k \leq x \leq \gamma_k/2, \, \gamma_k/k \leq y \leq C\gamma_k/k\}$ for some $C > 1$ to fix. Note that if $z \in \Omega^*$ and $l < k$, then
$$\abs{p_l(z)} \leq \dfrac{a_lk}{\gamma_k}.$$
Similarly, if $l > k$, then
$$\abs{p_l(z)} \leq \dfrac{a_l}{\abs{\gamma_l-x}} \leq \dfrac{a_l}{\abs{\gamma_l}-2\abs{\gamma_k}} \leq \dfrac{2a_l}{\abs{\gamma_l}}$$
by \eqref{cond:gammak}. One can also see that
$$\text{Re}(p_k(z)) \geq \dfrac{a_k\gamma_k/2}{9\gamma_k^2+C^2\gamma_k^2/k^2} \geq \dfrac{a_k}{20\gamma_k},$$
if $k > C$, and
$$\text{Im}(p_k(z)) \leq \dfrac{a_ky}{(\gamma_k-x)^2} \leq \dfrac{4a_ky}{\gamma_k^2}.$$
Therefore, by \eqref{cond:control},
\begin{equation}
\label{eq:boundRe}
\text{Re}(p(z)) \geq \text{Re}(p_k(z)) - \sum_{l\neq k}\abs{p_l(z)} \geq \dfrac{a_k}{40\gamma_k}, \quad z \in \Omega^*.
\end{equation}
Similarly, by \eqref{cond:control},
\begin{equation}
\label{eq:boundIm}
\text{Im}(p(z)) \leq \text{Im}(p_k(z)) + \sum_{l\neq k}\abs{p_l(z)} \leq \dfrac{5a_ky}{\gamma_k^2}, \quad z \in \Omega^*.
\end{equation}

To finish, choose $S_k \in \N$ with
\begin{equation}
\label{eq:S}
\dfrac{100\gamma_k^2}{a_k} \leq S_k \leq \dfrac{\log(C/2)}{\log(1+5a_k/\gamma_k^2)}.
\end{equation}
In order to clarify this, by \eqref{cond:limLog}, define $\alpha_k = \gamma_k^2/a_k$ and note that, if $\log(C/2) > 500$, then
$$\lim_{k \to + \infty} \dfrac{\log(C/2)}{\log(1+5/\alpha_k)}-100\alpha_k = + \infty.$$

Define $L_k = N_k+1+S_k$, and assume that $z_n \in \Omega^*$ for all $N_k +1 \leq n \leq L_k$. In that case, using \eqref{eq:boundIm} and \eqref{eq:S}, one has
$$y_{L_k} \leq \left(1+\dfrac{5a_k}{\gamma_k^2}\right)^{S_k}y_{N_k+1} \leq C\dfrac{\gamma_k}{k},$$
but, using \eqref{eq:boundRe} and \eqref{eq:S},
$$x_{L_k} \geq x_{N_k+1} + S_k\dfrac{a_k}{40\gamma_k} \geq -2\gamma_k + \dfrac{5}{2}\gamma_k = \dfrac{\gamma_k}{2}.$$
From these calculations, one can deduce that there must exist $N_k +1 < M_k < L_k$ with $z_n \in \Omega^*$ for all $N_k + 1 \leq n \leq M_k-1$ and
$$x_{M_k} \geq \dfrac{\gamma_k}{2}, \quad y_{M_k} \leq C\dfrac{\gamma_k}{k}.$$
This is the claim we wanted to prove, with any $C > 1$ such that $\log(C/2) > 500$, and any $k_1 \in \N$ with $k_1 \geq C$. 
\end{proof}
\end{theorem}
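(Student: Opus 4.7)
The strategy is to exploit the alternating signs of the poles $\gamma_k$ in order to produce two subsequences of the orbit, one with $\arg z_{n_k}\to 0$ (using indices with $\gamma_k>0$) and another with $\arg z_{m_k}\to \pi$ (using $\gamma_k<0$). Combined with Theorem \ref{teo:0HS}, which forces $\mathrm{Slope}[f,z_0]$ to be a closed interval, this will give the full $[0,\pi]$. By symmetry of the construction, I will only describe the subsequence with argument tending to $0$.

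The starting point is that $\mathrm{Im}(f(z)-z)\geq 0$ on $\H$, so $\{y_n\}$ is non-decreasing. For each $k$ there is therefore a well-defined last index $N_k$ with $y_{N_k}\leq |\gamma_k|/k < y_{N_k+1}$. Lemma \ref{lemma:orbitcontrol} then yields the a priori bound $|x_{N_k}|\leq 2|\gamma_k|$. Restricting attention to $k$ with $\gamma_k>0$, I split into two cases: either $x_{N_k}\geq \gamma_k/2$ for infinitely many such $k$, which immediately gives $\arg z_{N_k}\leq \arctan(2/k)\to 0$; or eventually $x_{N_k}\leq \gamma_k/2$, in which case the orbit arrives in the left part of the thin strip $\{-2\gamma_k\leq x\leq \gamma_k/2,\;1\leq y\leq \gamma_k/k\}$ and one must show that the dynamics carries it rightward before the imaginary part escapes.

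To handle the second case I would study the dynamics inside the rectangle
\[
\Omega^*=\{x+iy:\,-2\gamma_k\leq x\leq \gamma_k/2,\;\gamma_k/k\leq y\leq C\gamma_k/k\},
\]
for a universal constant $C>1$ to be chosen. In $\Omega^*$ the single pole $a_k/(\gamma_k-z)$ dominates: using \eqref{cond:control} to absorb the contributions from all $p_l$ with $l\neq k$, and using \eqref{cond:gammak} so that the poles $\gamma_l$ with $l>k$ are far from $\Omega^*$, one obtains
\[
\mathrm{Re}\,p(z)\gtrsim \frac{a_k}{\gamma_k},\qquad \mathrm{Im}\,p(z)\lesssim \frac{a_k\,y}{\gamma_k^{2}}\qquad (z\in\Omega^*).
\]
Hence each iterate inside $\Omega^*$ increases $x$ additively by at least $\sim a_k/\gamma_k$ and multiplies $y$ by at most $1+O(a_k/\gamma_k^2)$. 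A parallel (easier) estimate shows the orbit enters $\Omega^*$ in at most a single step after $N_k$, so $y_{N_k+1}\leq 2\gamma_k/k$.

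The main obstacle is balancing these two rates: to raise $x$ from roughly $-2\gamma_k$ up to $\gamma_k/2$ one needs $S_k\asymp \gamma_k^2/a_k$ iterations; during that time $y$ is multiplied by $(1+O(a_k/\gamma_k^{2}))^{S_k}$, which is bounded only because the exponent $S_k\cdot a_k/\gamma_k^2$ is bounded. The crucial input here is \eqref{cond:limLog}, i.e.\ $\gamma_k^2/a_k\to\infty$, which makes the two constraints
\[
\frac{100\,\gamma_k^{2}}{a_k}\leq S_k\leq \frac{\log(C/2)}{\log(1+5a_k/\gamma_k^{2})}
\]
simultaneously satisfiable for large $k$ once $C$ is chosen so that $\log(C/2)>500$. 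Then an $M_k\in(N_k,N_k+1+S_k]$ must exist with $x_{M_k}\geq \gamma_k/2$ and $y_{M_k}\leq C\gamma_k/k$, giving $\arg z_{M_k}\leq \arctan(2C/k)\to 0$ and completing the construction.
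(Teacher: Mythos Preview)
Your proposal is correct and follows essentially the same route as the paper's proof: the same reduction to a one-sided subsequence via Theorem~\ref{teo:0HS}, the same indices $N_k$ and two-case split using Lemma~\ref{lemma:orbitcontrol}, the same rectangle $\Omega^*$, the same dominant-pole estimates $\mathrm{Re}\,p\gtrsim a_k/\gamma_k$ and $\mathrm{Im}\,p\lesssim a_k y/\gamma_k^2$, and the same balancing of the step-count $S_k$ with $\log(C/2)>500$. There is nothing substantively different to compare.
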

\subsection{$\text{Slope}[f,z] = [0,\pi/2]$}
Suppose that

\begin{align}
z_0 = i, \label{cond:initialcond2} \\
\gamma_1 \geq 1, \quad 4\gamma_k \leq \gamma_{k+1}, \label{cond:gamma2} \\
\sum_{l=1}^k a_l \leq \dfrac{\gamma_k}{24k}, \quad \sum_{l = k+1}^{+\infty}\dfrac{a_l}{\gamma_l} \leq \dfrac{\gamma_k}{24k}, \label{cond:12} \\
\sum_{l=1}^{k-1}a_l \leq \dfrac{a_k}{64k^2}, \quad \sum_{l=k+1}^{+\infty}\dfrac{a_l}{\gamma_l} \leq \dfrac{a_k}{100k^2\gamma_k}, \label{cond:22}
\end{align}

For example, these conditions are achieved if $a_k = C_1(k!)^3$ and $\gamma_k = C_2(k!)^6$ for big enough $C_1,C_2 > 0$.

In order to clarify the exposition, note that \eqref{cond:gamma2} implies that
\begin{equation}
\label{cond:incgammakk2}
\abs{\gamma_k} \uparrow + \infty, \quad \dfrac{\abs{\gamma_k}}{k} \uparrow + \infty, \quad k \to +\infty,
\end{equation}
and \eqref{cond:tinl1} and \eqref{cond:incgammakk2} imply that
\begin{equation}
\label{cond:limLog2}
\gamma_k^2/a_k \to +\infty, \quad k \to + \infty.
\end{equation}

\begin{lemma}
\label{lemma:orbitcontrol2}
Assume \eqref{cond:tinl1}, \eqref{cond:initialcond2}-\eqref{cond:22}. For any $k \in \N$ and any $n \in \N$ such that $y_n \leq \gamma_k/k$, one has $0 \leq x_n \leq 3\gamma_k/2$.
\begin{proof}
Suppose that there exists $k,N \in \N$ such that $x_{N+1} \not\in [0,3\gamma_k/2]$, $0 \leq x_n \leq 3\gamma_k/2$ for $n \leq N$ and $y_{N+1} \leq \gamma_k/k$.

Assume that $x_{N+1} > 3\gamma_k/2$, and define $\Omega = \{x+iy \in \C : 0 \leq x \leq 3\gamma_k/2, \, y \geq 1\}$, and notice that $z_n \in \Omega$ for all $n \leq N$. Moreover, if $z =x+iy \in \Omega$ and $l \leq k$, then $\abs{\text{Re}(p_l(z))} \leq  \abs{p_l(z)} \leq a_l$. Similarly, if $l > k$, then
$$\abs{\text{Re}(p_l(z))}  \leq  \abs{p_l(z)} \leq \dfrac{a_l}{\gamma_l-x} \leq \dfrac{a_l}{\gamma_l-2\gamma_k} \leq \dfrac{2a_l}{\gamma_l},$$
by \eqref{cond:gamma2}. Using these inequalities and \eqref{cond:12}, one has
$\abs{\text{Re}(f(z_N)-z_N)} \leq \gamma_k/4$, and then $x_N \in [5\gamma_k/4,3\gamma_k/2]$. Thus, $\text{Re}(p_l(z_N)) < 0$ for all $l \leq k$. Moreover,
$$\abs{\text{Re}(p_k(z_N))} = \dfrac{a_k(x_N-\gamma_k)}{(\gamma_k-x_N)^2+y_N^2} \geq \dfrac{a_k}{5\gamma_k}.$$
Therefore, by \eqref{cond:22}, $\text{Re}(f(z_N)-z_N) < 0$, and then $x_{N+1} < x_N$. Note this is a contradiction.

Similarly, if $x_{N+1} < 0$, notice once more that $z_n \in \Omega$ for all $n \leq N$. Moreover, if $x_N \in [0,\gamma_1)$, then $\text{Re}(p_l(z_N)) > 0$ for all $l \in \N$, and so $x_{N+1} > x_N$, which is a contradiction. In other case, find $L \in \N$ such that $x_N \in [\gamma_L,\gamma_{L+1})$. If $z =x+iy \in \Omega$ and $l \leq L$, then $\abs{\text{Re}(p_l(z))} \leq  \abs{p_l(z)} \leq a_l$. Therefore, by \eqref{cond:12}, one has that
$$\text{Re}(p(z_N)) \geq -\sum_{l = 1}^L\abs{\text{Re}(p(z_N))} \geq -\dfrac{\gamma_L}{2}.$$
Therefore, $x_{N+1}  \geq \gamma_L/2$, which is also a contradiction.
\end{proof}
\end{lemma}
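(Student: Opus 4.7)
The plan is to mimic the contradiction argument used in Lemma \ref{lemma:orbitcontrol}, adapting it to the present asymmetric setting where all poles $\gamma_k$ are positive. I would assume toward a contradiction that there is a first index $N$ such that $x_n \in [0,3\gamma_k/2]$ for $n \leq N$ but $x_{N+1}\notin [0,3\gamma_k/2]$, while still $y_{N+1}\leq \gamma_k/k$. There are two cases to rule out: a \emph{right escape} $x_{N+1} > 3\gamma_k/2$ and a \emph{left escape} $x_{N+1} < 0$. In both cases, the orbit up to time $N$ lies in the strip $\Omega = \{x+iy : 0 \leq x \leq 3\gamma_k/2,\; y \geq 1\}$ (using \eqref{cond:initialcond2} and monotonicity of $y_n$), so I can control each $p_l(z_N)$ uniformly.

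For the right escape, the plan is to split the sum $\sum_l p_l(z_N)$ according to whether $l\leq k$ or $l>k$. For $l \leq k$, since $y_N\geq 1$, I get $|p_l(z_N)|\leq a_l$. For $l > k$, condition \eqref{cond:gamma2} gives $\gamma_l - x_N \geq \gamma_l - 2\gamma_k \geq \gamma_l/2$, so $|p_l(z_N)| \leq 2a_l/\gamma_l$. Summing with \eqref{cond:12} gives $|\mathrm{Re}(f(z_N)-z_N)|\leq \gamma_k/4$. Combined with $x_{N+1}>3\gamma_k/2$, this forces $x_N\geq 5\gamma_k/4$. With $x_N$ so close to $\gamma_k$, the single term $p_k$ now dominates: $\mathrm{Re}(p_k(z_N)) = a_k(\gamma_k-x_N)/((\gamma_k-x_N)^2+y_N^2)$ has sign $-$ and magnitude $\gtrsim a_k/\gamma_k$, and one checks using \eqref{cond:22} that the other $p_l$ contribute $o(a_k/\gamma_k)$. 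Hence $\mathrm{Re}(f(z_N)-z_N)<0$, so $x_{N+1}<x_N\leq 3\gamma_k/2$, a contradiction.

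For the left escape, the situation is a bit more delicate because the pole pushing $x_N$ to the right could in principle be any $\gamma_L$, not necessarily $\gamma_k$. I would first handle the easy subcase $x_N\in [0,\gamma_1)$: then $\gamma_l - x_N > 0$ for all $l$, so every $\mathrm{Re}(p_l(z_N))>0$, and $x_{N+1} > x_N \geq 0$, contradicting $x_{N+1}<0$. Otherwise, let $L$ be the unique index with $x_N\in[\gamma_L,\gamma_{L+1})$. For $l\leq L$ the term $p_l$ may have negative real part, but $|p_l(z_N)|\leq a_l$ since $y_N\geq 1$, so $\sum_{l\leq L}|\mathrm{Re}(p_l(z_N))|\leq \sum_{l\leq L}a_l$; by the first half of \eqref{cond:12} this is bounded by $\gamma_L/2$. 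Therefore $\mathrm{Re}(p(z_N))\geq -\gamma_L/2$, giving $x_{N+1}\geq x_N-\gamma_L/2 \geq \gamma_L/2 \geq 0$, again a contradiction.

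The main obstacle I anticipate is the left-escape case: it has no analogue in Lemma \ref{lemma:orbitcontrol}, since there the poles alternate in sign and $\Omega$ was already symmetric about the imaginary axis. Here the challenge is that the ``attracting'' pole could be any $\gamma_L$ with $L\leq k$, and one must verify that the budget built into \eqref{cond:12} is exactly the right one to absorb the cumulative negative contribution $-\sum_{l\leq L}a_l$ up to the unknown $L$. The right-hand escape, by contrast, is essentially the same calculation as in Lemma \ref{lemma:orbitcontrol} with the constants adjusted to reflect the width $3\gamma_k/2$ instead of $2|\gamma_k|$.
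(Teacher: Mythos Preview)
Your proposal is correct and follows essentially the same argument as the paper's proof: the same contradiction setup with a first escape time $N$, the same two cases (right escape via the step-size bound $|\mathrm{Re}(f(z_N)-z_N)|\leq \gamma_k/4$ forcing $x_N\geq 5\gamma_k/4$ and then using the dominant negative contribution of $p_k$; left escape via the subcase $x_N<\gamma_1$ and otherwise locating $L$ with $x_N\in[\gamma_L,\gamma_{L+1})$ and bounding the negative part by $\sum_{l\leq L}a_l\leq \gamma_L/2$). The only cosmetic difference is that in the right-escape case the paper observes all $\mathrm{Re}(p_l(z_N))<0$ for $l\leq k$ (since $x_N>\gamma_k\geq\gamma_l$), whereas you bound those terms in magnitude via \eqref{cond:22}; both routes give the same conclusion.
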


\begin{lemma}
\label{lemma:elevator2}
Assume \eqref{cond:tinl1}, \eqref{cond:initialcond2}-\eqref{cond:22}. Given $k \in \N$, define $\Omega = \{x+iy \in \C: \gamma_k/2 \leq x \leq 3\gamma_k/2, \, 1 \leq y \leq k\gamma_k\}$. Let $N,M \in \N$ be such that $z_n \in \Omega$ for $N \leq n \leq M$ and $z_{M+1} \not\in \Omega$. Then, $\gamma_k/2 \leq x_{M+1} \leq 3\gamma_k/2$ and $y_{M+1} > k\gamma_k$.
\begin{proof}
It is enough to prove that if $z_N \in \Omega$, then $\gamma_k/2 \leq x_{N+1} \leq 3\gamma_k/2$. To do this, split $\Omega = \Omega_1 \cup \Omega_2$, where
$$\Omega_1 = \{x+iy \in \C: 3\gamma_k/4 \leq x \leq 5\gamma_k/4, \, 1 \leq y \leq k\gamma_k\},$$
$$\Omega_2 = \{x+iy \in \C: \gamma_k/4 < \abs{x-\gamma_k} \leq \gamma_k/2, \, 1 \leq y \leq k\gamma_k\}.$$

Assume $z \in \Omega_1$. If $l < k$, then
$$\abs{p_l(z)} \leq \dfrac{2a_l}{\gamma_k},$$
by \eqref{cond:gamma2}. If $l > k$, then
$$\abs{p_l(z)} \leq \dfrac{2a_l}{\gamma_l},$$
also by \eqref{cond:gamma2}. Since $y \geq 1$, $\abs{p_k(z)} \leq a_k$. Therefore, by \eqref{cond:12}, one has
$$\abs{p(z)} \leq \dfrac{\gamma_k}{4},$$
and so
$$\abs{x_N-x_{N+1}} \leq \abs{p(z)} \leq \dfrac{\gamma_k}{4},$$
meaning that
$$\abs{x_{N+1}-\gamma_k} \leq \dfrac{\gamma_k}{2}.$$

Following a similar idea, assume $z \in \Omega_2$. If $l < k$, then
$$\abs{p_l(z)} \leq \dfrac{4a_l}{\gamma_k},$$
by \eqref{cond:gamma2}. If $l > k$, then
$$\abs{p_l(z)} \leq \dfrac{2a_l}{\gamma_l},$$
also by \eqref{cond:gamma2}. Similarly,
$$\abs{\text{Re}(p_k(z))} \geq \dfrac{a_k}{4k^2\gamma_k}.$$
Therefore, by \eqref{cond:22}, the sign of $\text{Re}(p_k(z))$ is the same as the sign of $\text{Re}(p(z))$, that is, the sign of $\gamma_k - x$. This means that
$$\abs{x_{N+1}-\gamma_k} \leq \abs{x_N-\gamma_k} \leq \dfrac{\gamma_k}{2}.$$ 
\end{proof}
\end{lemma}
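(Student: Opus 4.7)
The plan is to reduce the lemma to a single-step statement: whenever $z_N \in \Omega$, one has $\gamma_k/2 \leq x_{N+1} \leq 3\gamma_k/2$. Once this is granted, the lemma follows immediately from the fact that $\text{Im}(p_l(z)) \geq 0$ for every $l$, so $\{y_n\}$ is non-decreasing; combined with $y_M \geq 1$ and the hypothesis $z_{M+1} \notin \Omega$, the inclusion $x_{M+1} \in [\gamma_k/2, 3\gamma_k/2]$ forces $y_{M+1} > k\gamma_k$.

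For the single-step claim I would decompose $\Omega = \Omega_1 \cup \Omega_2$, where $\Omega_1 = \{z \in \Omega : \abs{x - \gamma_k} \leq \gamma_k/4\}$ is a central strip and $\Omega_2 = \{z \in \Omega : \gamma_k/4 < \abs{x - \gamma_k} \leq \gamma_k/2\}$ is the surrounding annular region, attacking each piece with a different mechanism. On $\Omega_1$, the goal is simply to bound $\abs{p(z)}$ by $\gamma_k/4$ via a termwise estimate: $\abs{p_k(z)} \leq a_k$ since $y \geq 1$, while \eqref{cond:gamma2} yields $\abs{\gamma_l - x} \geq \gamma_k/2$ for $l < k$ (so $\abs{p_l(z)} \leq 2a_l/\gamma_k$) and $\abs{\gamma_l - x} \geq \gamma_l/2$ for $l > k$ (so $\abs{p_l(z)} \leq 2a_l/\gamma_l$). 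Summing and invoking \eqref{cond:12} produces $\abs{p(z)} \leq \gamma_k/4$, after which $\abs{x_{N+1} - \gamma_k} \leq \abs{x_N - \gamma_k} + \abs{p(z)} \leq \gamma_k/2$ is immediate.

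On $\Omega_2$ a more delicate argument is needed, because $p_k$ can no longer be dismissed as small; its role is in fact to pull the orbit back toward $\gamma_k$. The strategy is a sign-and-dominance result: $\text{Re}(p_k(z))$ has the sign of $\gamma_k - x$, and in magnitude it dominates the contributions of all other $p_l$. The lower bound $\abs{\text{Re}(p_k(z))} \gtrsim a_k/(k^2\gamma_k)$ follows from $\abs{\gamma_k - x} \geq \gamma_k/4$ together with $y \leq k\gamma_k$, which makes the denominator of $p_k$ at most $O(k^2\gamma_k^2)$. For $l \neq k$ the same type of estimates as in $\Omega_1$ apply (slightly weakened to $\abs{p_l(z)} \leq 4a_l/\gamma_k$ for $l < k$), and the stronger summability \eqref{cond:22} then forces $\sum_{l \neq k}\abs{p_l(z)} \ll \abs{\text{Re}(p_k(z))}$. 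Hence $\text{Re}(p(z))$ inherits the sign of $\gamma_k - x$, the iterate moves toward $\gamma_k$, and a crude magnitude bound $\abs{\text{Re}(p(z))} \lesssim a_k/\gamma_k$ (which by \eqref{cond:12} is much smaller than $\gamma_k$) rules out overshooting past the opposite side of the strip, giving $\abs{x_{N+1} - \gamma_k} \leq \abs{x_N - \gamma_k} \leq \gamma_k/2$.

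The main obstacle is calibrating the estimates on $\Omega_2$: because $y$ may be as large as $k\gamma_k$, the lower bound on $\abs{\text{Re}(p_k(z))}$ is weakened by a factor of $k^2$, so the head and tail summability conditions must be tightened correspondingly---this is precisely why \eqref{cond:22} is stronger than \eqref{cond:12}. A secondary subtlety is that a bare sign condition on $\text{Re}(p(z))$ does not suffice to keep $x_{N+1}$ inside the strip; one must also bound the step magnitude to forbid overshooting to the far side, which is where the smallness $a_k \leq \gamma_k/(24k)$ from \eqref{cond:12} re-enters the picture.
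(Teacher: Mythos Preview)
Your proposal is correct and matches the paper's proof essentially line for line: the same reduction to the one-step claim, the same split $\Omega = \Omega_1 \cup \Omega_2$ into a central strip $|x-\gamma_k|\le\gamma_k/4$ and an outer band, the same termwise bounds on each piece (including the $4a_l/\gamma_k$ constant for $l<k$ on $\Omega_2$ and the $a_k/(4k^2\gamma_k)$ lower bound for $|\mathrm{Re}(p_k)|$), and the same sign-dominance argument via \eqref{cond:22}. Your explicit mention of the overshooting issue on $\Omega_2$ is in fact a point the paper glosses over---the paper jumps directly from ``$\mathrm{Re}(p(z))$ has the sign of $\gamma_k-x$'' to ``$|x_{N+1}-\gamma_k|\le|x_N-\gamma_k|$'' without bounding the step size---so your version is slightly more careful there.
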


\begin{theorem}
Assume \eqref{cond:initialcond2}-\eqref{cond:limLog2}. Then, $\mathrm{Slope}[f,z_0] = [0,\pi/2]$.
\begin{proof}
By Lemma \ref{lemma:orbitcontrol2}, notice that $x_n \geq 0$ for all $n \in \N$. Therefore, to show the result, it is enough to find two subsequences $\{z_{n_k}\}$ and $\{z_{m_k}\}$ with $\arg(z_{n_k}) \to 0$ and $\arg(z_{m_k}) \to \pi/2$.

To do this, for each $k \in \N$ define $N_k \in \N$ such that $y_{N_k} \leq \gamma_k/k < y_{N_k+1}$. Note that the map $k \mapsto N_k$ may not be injective. 

Consider the set $A = \{k \in \N : \gamma_k/2 \leq x_{N_k} \leq 3\gamma_k/2\}$. For every $k \in A$ one has $\arg(z_{N_k}) \leq \arctan(2/k)$, and by Lemma \ref{lemma:elevator2} it is also possible to find $M_k \in \N$ with $M_k > N_k$ such that $\arg(z_{M_k}) \geq \arctan(2k/3)$. Then, if $A$ is infinite, the result follows.

On the other hand, if $A$ is finite, Lemma \ref{lemma:orbitcontrol2} assures that it is possible to find $k_0 \in \N$ such that for every $k \in \N$ with $k \geq k_0$ one has
$$y_{N_k} \leq \gamma_k/k < y_{N_k+1}, \quad 0 \leq x_{N_k} \leq \dfrac{\gamma_k}{2}.$$
We claim that there exist $C > 1$ and $k_1 \in \N$, $k_1 \geq k_0$, such that for all $k \in \N$ with $k \geq k_1$ there exist $N'_k,M'_k \in \N$ with $M'_k > N'_k \geq N_k$ such that $\arg(z_{N'_k}) \leq \arctan(2C/k)$ and $\arg(z_{M'_k}) \leq \arctan(2k/3)$. Thus, the results follows from the claim.

To prove the claim, fix $k \in \N$ with $k \geq k_0$, define $\Omega = \{x+iy \in \C : -2\gamma_k \leq x \leq \gamma_k/2, \, 1 \leq y \leq \gamma_k/k\}$, and notice that $z_n \in \Omega$ for $n \leq N_k$.

If $z = x+iy \in \Omega$ and $l \leq k$, then $\text{Im}(p_l(z)) \leq  \abs{p_l(z)} \leq a_l$. Similarly, if $l > k$, then
$$\text{Im}(p_l(z))  \leq  \abs{p_l(z)} \leq \dfrac{a_l}{\gamma_l-x} \leq \dfrac{a_l}{\gamma_l-2\gamma_k} \leq \dfrac{2a_l}{\gamma_l},$$
by \eqref{cond:gamma2}. Using these inequalities and \eqref{cond:12}, one has
$\abs{\text{Im}(f(z_{N_k})-z_{N_k})} \leq \gamma_k/k$, and then $\gamma_k/k < y_{N_k+1} \leq 2\gamma_k/k$.

Let us now define $\Omega^* = \{x+iy \in \C : 0 \leq x \leq \gamma_k/2, \, \gamma_k/k \leq y \leq C\gamma_k/k\}$ for some $C > 1$ to fix. If $z =x+iy \in \Omega^*$ and $l < k$, then
$$\abs{p_l(z)} \leq \dfrac{a_lk}{\gamma_k}.$$
In the case that $l > k$, one has
$$\abs{p_l(z)} \leq \dfrac{a_l}{\gamma_l-\gamma_k/2} \leq \dfrac{2a_l}{\gamma_l},$$
by \eqref{cond:gamma2}. Similarly,
$$\text{Im}(p_k(z)) \leq  \dfrac{4a_ky}{\gamma_k^2}, \quad \text{Re}(p_k(z)) \geq \dfrac{a_k}{4\gamma_k}.$$
Therefore,
\begin{equation}
\label{eq:boundImRe2}
\text{Im}(p(z)) \leq \dfrac{5a_ky}{\gamma_k^2}, \quad \text{Re}(p(z)) \geq \text{Re}(p_k(z))-\sum_{l = 1}^{k-1}\abs{p_l(z)} \geq \dfrac{a_k}{8\gamma_k},
\end{equation}
for all $z \in \Omega^*$, by \eqref{cond:22}.

To finish, choose $S_k \in \N$ with
\begin{equation}
\label{eq:S2}
\dfrac{4\gamma_k^2}{a_k} \leq S_k \leq \dfrac{\log(C/2)}{\log(1+5a_k/\gamma_k^2)}.
\end{equation}
In order to clarify this, by \eqref{cond:limLog2}, define $\alpha_k = \gamma_k^2/a_k$ and note that, if $\log(C/2) > 20$, then
$$\lim_{k \to + \infty} \dfrac{\log(C/2)}{\log(1+5/\alpha_k)}-4\alpha_k = + \infty.$$

Define $L_k = N_k+1+S_k$, and assume that $z_n \in \Omega^*$ for all $N_k + 1 \leq n \leq L_k$. In that case, using \eqref{eq:boundImRe2} and \eqref{eq:S2}, one has
$$y_{L_k} \leq \left(1+\dfrac{5a_k}{\gamma_k^2}\right)^{S_k}y_{N_k+1} \leq C\dfrac{\gamma_k}{k},$$
but, using \eqref{eq:boundImRe2} and \eqref{eq:S2},
$$x_{L_k} \geq x_{N_k+1} + S_k\dfrac{a_k}{8\gamma_k} \geq \dfrac{\gamma_k}{2}.$$
From this calculations, one can deduce that there must exist $N_k +1 < N'_k < L_k$ with $z_n \in \Omega^*$ for all $N_k +1\leq n \leq N'_k-1$ and
$$x_{N'_k}\geq \dfrac{\gamma_k}{2}, \quad y_{N'_k} \leq C\dfrac{\gamma_k}{k}.$$
In particular, $\arg(z_{N'_k}) \leq \arctan(2C/k)$.

Moreover, if $k^2 \geq C$, then $y_{N'_k} \leq k\gamma_k$. Therefore, from Lemma \ref{lemma:elevator2} it is possible to find $M'_k > N'_k$ such that
$$\gamma_k/2 \leq x_{M'_k+1} \leq 3\gamma_k/2, \quad y_{M'_k+1} > k\gamma_k,$$
from which it follows that $\arg(z_{M'_k}) \geq \arctan(2k/3)$.

This is the claim we wanted to prove, with any $C > 1$ such that $\log(C/2) > 20$,
and any $k_1 \in \N$ with $k_1^2 \geq C$.
\end{proof}
\end{theorem}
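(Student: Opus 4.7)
The plan is to mirror the strategy used in the proof for $\mathrm{Slope}[f,z_0]=[0,\pi]$, now exploiting that every $\gamma_k$ is positive, so the orbit is forced into the right half of $\H$. First I would use Lemma \ref{lemma:orbitcontrol2} to conclude that $x_n\geq 0$ for every $n\in\N$: since $\{y_n\}$ is non-decreasing and tends to $+\infty$, for each $n$ there is a $k$ with $y_n\leq\gamma_k/k$ by \eqref{cond:incgammakk2}, and the lemma then forces $0\leq x_n\leq 3\gamma_k/2$. Consequently $\arg(z_n)\in[0,\pi/2]$ for every $n$, so $\mathrm{Slope}[f,z_0]\subset[0,\pi/2]$. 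Because Theorem \ref{teo:0HS} guarantees that the slope set is a closed interval, it is enough to exhibit two subsequences $\{z_{n_k}\}$ and $\{z_{m_k}\}$ whose arguments converge to $0$ and $\pi/2$, respectively.

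For each $k\in\N$, pick $N_k$ with $y_{N_k}\leq\gamma_k/k<y_{N_k+1}$ and split the discussion according to whether the set $A=\{k\in\N:\gamma_k/2\leq x_{N_k}\leq 3\gamma_k/2\}$ is infinite. If $A$ is infinite, then for every $k\in A$ the point $z_{N_k}$ already lies in the rectangle $\Omega_k=\{x+iy:\gamma_k/2\leq x\leq 3\gamma_k/2,\ 1\leq y\leq k\gamma_k\}$ and satisfies $\arg(z_{N_k})\leq\arctan(2/k)$, producing a subsequence with arguments tending to $0$. Lemma \ref{lemma:elevator2} applied to $\Omega_k$ then ensures the orbit can only leave $\Omega_k$ through its top side, so there is $M_k\geq N_k$ with $\gamma_k/2\leq x_{M_k+1}\leq 3\gamma_k/2$ and $y_{M_k+1}>k\gamma_k$, whence $\arg(z_{M_k+1})\geq\arctan(2k/3)\to\pi/2$.

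If $A$ is finite, then Lemma \ref{lemma:orbitcontrol2} yields, for all large $k$, that $0\leq x_{N_k}<\gamma_k/2$, and I would replicate the drift argument from the previous theorem. Working in the strip $\Omega^{*}=\{x+iy:0\leq x\leq\gamma_k/2,\ \gamma_k/k\leq y\leq C\gamma_k/k\}$ for a constant $C>1$ with $\log(C/2)>20$, conditions \eqref{cond:12} and \eqref{cond:22} combined with the elementary estimates on each $\abs{p_l(z)}$ yield the uniform bounds $\mathrm{Re}(p(z))\geq a_k/(8\gamma_k)$ and $\mathrm{Im}(p(z))\leq 5a_k y/\gamma_k^2$ throughout $\Omega^{*}$. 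Choosing $S_k\in\N$ in the window $4\gamma_k^2/a_k\leq S_k\leq\log(C/2)/\log(1+5a_k/\gamma_k^2)$, which is non-empty for large $k$ by \eqref{cond:limLog2}, one checks that while the orbit remains in $\Omega^{*}$ the imaginary part can grow by at most a factor $C/2$ along $S_k$ steps, whereas the real part gains at least $S_k\,a_k/(8\gamma_k)\geq\gamma_k/2$. Hence there must be a first index $N'_k$ at which the orbit exits $\Omega^{*}$ through the right side, i.e.\ with $x_{N'_k}\geq\gamma_k/2$ and $y_{N'_k}\leq C\gamma_k/k$, so that $\arg(z_{N'_k})\leq\arctan(2C/k)\to 0$. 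For $k$ large enough to have $C\leq k^2$ one has $y_{N'_k}\leq k\gamma_k$, and a second application of Lemma \ref{lemma:elevator2} produces $M'_k>N'_k$ with $\arg(z_{M'_k+1})\geq\arctan(2k/3)\to\pi/2$.

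The hard part is the drift step: one must certify that the imaginary part does not exceed $C\gamma_k/k$ before the real part reaches $\gamma_k/2$, so that the uniform bounds on $\mathrm{Re}(p)$ and $\mathrm{Im}(p)$ stay in force along the whole segment $N_k+1\leq n\leq N'_k$. This is exactly the simultaneous compatibility of the two bounds defining $S_k$, and it is precisely what \eqref{cond:limLog2} guarantees. The remaining bookkeeping, including the short one-step transition from $z_{N_k}$ to $z_{N_k+1}$ that deposits the orbit on the lower edge of $\Omega^{*}$ via the same estimate $\abs{\mathrm{Im}(f(z_{N_k})-z_{N_k})}\leq\gamma_k/k$ used in the $[0,\pi]$ theorem, is then a routine adaptation.
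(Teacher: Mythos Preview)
Your proposal is correct and follows essentially the same approach as the paper's proof: the same case split on the set $A$, the same drift estimates $\mathrm{Re}(p)\geq a_k/(8\gamma_k)$ and $\mathrm{Im}(p)\leq 5a_ky/\gamma_k^2$ on $\Omega^*$, the same window for $S_k$ with the threshold $\log(C/2)>20$, and the same appeal to Lemma~\ref{lemma:elevator2} to manufacture the $\pi/2$ subsequence. The only cosmetic difference is that you write $\arg(z_{M_k+1})$ where the paper writes $\arg(z_{M_k})$, but your indexing is in fact the one that matches the conclusion of Lemma~\ref{lemma:elevator2} directly.
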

\begin{remark}
In the literature (see \cite{CDP}), a parabolic function $g \colon \D \to \D$ with Denjoy-Wolff point $\tau$ is said to be of angular-class of order $2$ at $\tau$, denoted as $g \in C^2_A(\tau)$, if
$$g(z) = \tau + (z-\tau) + \dfrac{a_2}{2}(z-\tau)^2 + \gamma(z), \quad z \in \D,$$
where $a_2 \in \C$ and $\gamma$ is a holomorphic function on $\D$ with $\angle\lim_{z \to \tau}\dfrac{\gamma(z)}{(z-\tau)^2}=0$.

In \cite[Proposition 2.1]{CDP}, the authors prove that a parabolic function $g \colon \D \to \D$ is in $C^2_A(\tau)$ if and only if its corresponding conjugated map $f \colon \H \to \H$ whose Denjoy-Wolff point is $\infty$ satisfies that
$$\angle\lim_{z \to \infty}(f(z)-z) \in \C.$$
This can be used to show the key difference between the explicit examples given in this section and the one that Wolff proposed on \cite{WolffSlope}: the former limit is not convergent for Wolff's function, while using Lemma \ref{lemma:anglelimp}, any parabolic function $f \colon \H \to \H$ such that $t \in L^1(\mu)$ satisfies that $\angle\lim_{z \to \infty}(f(z)-z) = \beta \in \R$,
where the notation of Theorem \ref{teo:expr} has been used. Therefore, the examples of this section are endowed with the regularity of the class $C^2_A(\tau)$.

In the continuous setting, the examples developed by Kelgiannis in \cite[Theorem 1]{KelgiannisSlope} do not control how orbits behave. Instead, he used the concept of Koenigs map to propose a geometric description of the slope problem, using several estimates through different harmonic measures. In this sense, the proofs that are proposed in this paper seem rather more straight.
\end{remark}

\end{document}